\pdfoutput=1
\documentclass[letterpaper]{amsart}

\usepackage[utf8]{inputenc}
\usepackage{lmodern}
\usepackage{mathtools,amsfonts,amsthm,mathrsfs,amssymb,bm}
\mathtoolsset{showonlyrefs}

\usepackage[margin=1in]{geometry}

\usepackage[T1]{fontenc}

\usepackage[dvipsnames]{xcolor}
\usepackage{textcomp}

\usepackage[shortlabels]{enumitem}

\PassOptionsToPackage{pdfusetitle}{hyperref}
\usepackage{bookmark}
\usepackage{hypcap}
\usepackage{microtype}

\newtheorem{theorem}{Theorem}
\newtheorem{lemma}[theorem]{Lemma}
\newtheorem{proposition}[theorem]{Proposition}

\newtheorem{claim}[theorem]{Claim}

\theoremstyle{definition}
\newtheorem{definition}[theorem]{Definition}

\usepackage{cleveref}
\crefname{claim}{claim}{Claim}
\AddToHook{env/proposition/begin}{\crefalias{theorem}{proposition}}
\AddToHook{env/lemma/begin}{\crefalias{theorem}{lemma}}
\AddToHook{env/conjecture/begin}{\crefalias{theorem}{conjecture}}
\AddToHook{env/claim/begin}{\crefalias{theorem}{claim}}

\makeatletter
\newenvironment{claimproof}[1][Proof]{\par
    \pushQED{\qed}%
    
    \normalfont \topsep6\p@\@plus6\p@\relax
    \trivlist
    \item[\hskip\labelsep
    \textit{#1}\@addpunct{.}~]\ignorespaces
}{%
    \popQED\endtrivlist\@endpefalse%
}
\makeatother

\makeatletter
\newcommand{\step}[1]{%
  \par%
  \addvspace{\medskipamount}%
  \textit{#1\@addpunct{.}}\enspace\ignorespaces%
}
\makeatother

\NewDocumentCommand{\EE}{}{\mathbb{E}}
\NewDocumentCommand{\PP}{}{\mathbb{P}}
\NewDocumentCommand{\QQ}{}{\mathbb{Q}}
\NewDocumentCommand{\RR}{}{\mathbb{R}}

\NewDocumentCommand{\bh}{}{\mathbf{h}}
\NewDocumentCommand{\bq}{}{\mathbf{q}}
\NewDocumentCommand{\bu}{}{\mathbf{u}}
\NewDocumentCommand{\bv}{}{\mathbf{v}}
\NewDocumentCommand{\bw}{}{\mathbf{w}}
\NewDocumentCommand{\bx}{}{\mathbf{x}}
\NewDocumentCommand{\by}{}{\mathbf{y}}

\NewDocumentCommand{\bbeta}{}{\bm{\beta}}
\NewDocumentCommand{\bgam}{}{\bm{\gam}}
\NewDocumentCommand{\blam}{}{\bm{\lam}}
\NewDocumentCommand{\bmu}{}{\bm{\mu}}
\NewDocumentCommand{\btau}{}{\bm{\tau}}

\NewDocumentCommand{\bone}{}{\bm{1}}
\NewDocumentCommand{\bzero}{}{\bm{0}}

\NewDocumentCommand{\cE}{}{\mathcal{E}}
\NewDocumentCommand{\cG}{}{\mathcal{G}}
\NewDocumentCommand{\cI}{}{\mathcal{I}}

\NewDocumentCommand{\gam}{}{\gamma}
\NewDocumentCommand{\Gam}{}{\Gamma}
\NewDocumentCommand{\lam}{}{\lambda}

\DeclareMathOperator{\diag}{diag}
\DeclareMathOperator{\mad}{mad}

\DeclarePairedDelimiter{\abs}{\lvert}{\rvert}
\DeclarePairedDelimiter{\norm}{\lVert}{\rVert}
\DeclarePairedDelimiterXPP{\opnorm}[1]{}{\lVert}{\rVert}{_{\mathrm{op}}}{#1}

\RenewDocumentCommand{\epsilon}{}{\varepsilon}
\RenewDocumentCommand{\subset}{}{\subseteq}
\RenewDocumentCommand{\Pr}{}{\PP}
\NewDocumentCommand{\trans}{}{^{\mathsf{T}}}

\DeclareMathOperator{\ind}{Ind}

\title{Degree-sequence bounds for independent sets via multivariate local occupancy}

\date{\today}

\author[E.\ Davies]{Ewan Davies}
\address{Department of Computer Science, Colorado State University, Fort Collins CO, USA}
\email{research@ewandavies.org}
\thanks{E.D.\ supported in part by NSF grant CCF-2309707.}

\author[JS.\ Sandhu]{Juspreet Singh Sandhu}
\address{Department of Computer Science, Colorado State University, Fort Collins CO, USA}
\email{js.sandhu@colostate.edu}

\author[J.\ Seo]{Jaehyeon Seo}
\address{Department of Mathematics, Yonsei University, Seoul, South Korea}
\email{jaehyeonseo@yonsei.ac.kr}
\thanks{J.S.\ supported by Samsung STF Grant SSTF-BA2201-02 and the National Research Foundation of Korea (NRF) grant MSIT NRF-2022R1C1C1010300. This research was done while J.S.\ was visiting Caltech, hosted by David Conlon and supported by the BK21 FOUR Support Program for Outstanding Graduate Students' International Joint Training.}

\author[B.\ Tan]{Brian Tan}
\address{Department of Computer Science, Colorado State University, Fort Collins CO, USA}
\email{Brian.Tan@colostate.edu}

\begin{document}
\begin{abstract}
We present new degree-sequence lower bounds on the expected size of an independent set from the hard-core model. 
For arbitrary graphs, we establish a multivariate lower bound inspired by a conjecture of the first author and Kang and a recent bound on the multivariate partition function due to Lee and the third author.
By applying a novel spectral analysis to the local occupancy linear program, our method successfully bypasses the convergence radius limitations of the cluster expansion and avoids induction. 
For graphs with bounded local maximum average degree, including triangle-free graphs, we prove a univariate bound extending prior work by a subset of the authors. 
In both cases our bounds require the fugacities to be upper bounded by $c/\Delta$ where $\Delta$ is the maximum degree of the graph and $c$ is an absolute constant depending on the setting.
\end{abstract}

\maketitle

\section{Introduction}

Problems related to understanding the size and number of independent sets in graphs have a long history and find applications in a variety of areas of mathematics and computer science. 
We are interested in results that give lower bounds on the independence number, the size $\alpha(G)$ of a largest independent set in a graph $G$, subject to local information such as degree and the sparsity of neighborhoods. 
Generally, we seek bounds in terms of local structure in the graph such as the maximum degree, average degree, or degree sequence and we consider local density conditions such as graphs having few triangles. 
A small family of proof techniques exist for such bounds: induction can be used to prove the Caro--Wei theorem~\cite{Car79,Wei81} (and generalizations~\cite{KP24}), and induction together with some convexity is the main idea in the proofs of Shearer's well-known results for triangle-free graphs~\cite{She83,She91}, and recent developments thereof~\cite{MS25}. 
More probabilistic methods include analyzing a uniform random independent set~\cite{Alo96,She95a}, and a generalization of this involves the so-called \emph{hard-core model} that forms the basis of this work~\cite{DJPR17b}. There are also other distributions on independent sets that give rise to results of the type we seek~\cite{AS16,BFS+26,MS25}.

The hard-core model generalizes the uniform distribution on independent sets in such a way that a useful spatial independence property is preserved. 
Consider a graph $G=(V,E)$ and the set $\cI(G)$ of independent sets in $G$. For a \emph{fugacity} vector $\blam\in[0,\infty)^V$ we define for any $I\in\cI(G)$ the measure
\[ \Pr_{G,\blam}(I) = \frac{1}{Z_G(\blam)}\prod_{v\in I}\lam_v, \] 
where $Z_G(\blam) = \sum_{J\in \cI(G)}\prod_{v\in J}\lam_v$ is the normalizing constant known as the \emph{partition function} that makes this a probability measure.
It is not too hard to show that distributions on $\cI(G)$ with the spatial Markov property correspond to those described as hard-core models, and that setting $\blam=\bone=(1,\dotsc,1)$ yields the uniform distribution on $\cI(G)$.
We let $\EE_{G,\blam}$ denote expectation with respect to the above hard-core measure and let $X\sim \Pr_{G,\blam}$ denote the random independent set. It is convenient to identify $X\in \cI(G)$ with the indicator vector $X\in\{0,1\}^V$ such that $X_v = 1$ iff $v\in X$.

Perhaps because of the utility of spatial independence, or the fact that such distributions are entropy-maximizing subject to given marginals $\Pr(X_v)$ for $v\in V$, the hard-core model is a useful tool in understanding the size and structure of independent sets in a surprising number of contexts. 
A substantial body of work shows that a sufficiently strong \emph{local} understanding of marginals in the hard-core model can be one of the best-known tools to understand lower bounds on independence number~\cite{DJPR17b,DK25} and upper bounds on (fractional and list) chromatic number~\cite{DJKP20,DdKP21,MR02} of locally sparse graphs. 
Further applications include determining the location of computational thresholds~\cite{DP23,DL24} and establishing algorithms for finding large independent sets~\cite{LMR+24} or colorings~\cite{DKPS20a}.

We are interested in pushing these methods further in the context of extremal bounds motivated by Ramsey theory and classic extremal graph theory. 
The independence number $\alpha(G)$ is the size of a largest independent set in $G$, and to estimate the Ramsey number $R(3,k)$ is equivalent to estimating the minimum of $\alpha(G)$ over all $n$-vertex triangle-free graphs $G$. 
A local analysis of the hard-core model yields asymptotically the best-known upper bound $R(3,k)\le (1+o(1))k^2/\log k$~\cite{DJPR17b}, though the earlier method of Shearer~\cite{She83} gives better lower-order terms.
The key idea of this local analysis of the hard-core model, often called \emph{local occupancy}, is to consider an appropriate boundary in the graph and invoke spatial independence to show that for any vertex $u\in V$ either $\Pr(u\in X)$ or $\EE\abs{N(u)\cap X}$ is large. 
Given an objective, such as to minimize or maximize $\EE_{G,\blam}\abs{X}$ over graphs $G$ in some class $\cG$, and $\blam$ satisfying some condition, one writes the objective as an optimization problem constrained by facts that follow from the assumption that $X$ is sampled from a hard-core model at fugacity $\blam$ on a graph in the class. 

Our main contribution is to generalize several prior results on the expected size of an independent set from the hard-core model to give lower bounds in terms of degree sequence instead of maximum degree. 
For this, we write $d_u$ for the degree of a vertex $u$ when the graph is clear from context.
We are also able to handle the multivariate setting in some of our bounds. 
Our first result handles arbitrary graphs with a given degree sequence, and is inspired by the Caro--Wei theorem~\cite{Car79,Wei81} and a conjecture of the first author and Kang~\cite{DK25}.

\begin{theorem}\label{thm:multivar-occ-lower-bound}
    Let \(G\) be a graph of maximum degree \(\Delta\). Let \(\blam\in [0,\infty)^{V(G)}\) be such that \(\lambda_u<1/\Delta\) for all \(u\in V(G)\).
    Then
    \[
        \EE_{G,\blam} \abs{I} \ge \sum_{u\in V(G)} \frac{\lam_u}{1+(d_u+1)\lam_u}.
    \]
\end{theorem}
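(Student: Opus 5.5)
The plan is to phrase the statement as a lower bound on the value of a local occupancy linear program, and then to bound that value spectrally rather than by a pointwise dual certificate.

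\step{Local constraints} Write $p_u=\Pr_{G,\blam}(u\in X)$. By spatial independence, $p_u=\lam_u\Pr(X\cap N[u]=\emptyset)$, and a union bound gives $\Pr(X\cap N[u]=\emptyset)\ge 1-p_u-\sum_{v\sim u}p_v$. Hence every vertex satisfies
\[ (1+\lam_u)p_u+\lam_u\sum_{v\sim u}p_v\ge\lam_u ,\qquad p_u\ge 0. \]
So $\EE|X|=\sum_u p_u$ is at least the minimum of $\bone\cdot\bq$ over all $\bq$ satisfying these constraints. We may assume every $\lam_u>0$, since vertices with $\lam_u=0$ can be deleted: they contribute $0$ to both sides and deleting them only lowers degrees, which raises the right-hand side.

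\step{Symmetrizing} Substitute $p_u=\lam_u q_u$ and multiply the $u$-th constraint by $\lam_u$. With $\Lambda=\diag(\blam)$ and $A$ the adjacency matrix, the constraints become $B\bq\ge\Lambda\bone$ and $\bq\ge 0$, where
\[ B=\Lambda+\Lambda^2+\Lambda A\Lambda \]
is symmetric, and the objective is $\blam\cdot\bq$. The target is $\sum_u\lam_u y_u$ with $y_u=1/(1+(d_u+1)\lam_u)$.

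The naive weak-duality certificate $\by$ would need $B\by\le\Lambda\bone$. This is equivalent to
\[ \sum_{v\sim u}\lam_v y_v\le d_u\lam_u y_u , \]
which fails when neighbours of $u$ carry larger fugacity. So a pointwise dual certificate is not enough, and that is why a spectral argument is needed.

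\step{Spectral step} Conjugate by $\Lambda^{1/2}$:
\[ \Lambda^{-1/2}B\Lambda^{-1/2}=I+\Lambda+\Lambda^{1/2}A\Lambda^{1/2}. \]
The operator norm of $\Lambda^{1/2}A\Lambda^{1/2}$ is at most the maximum row sum of $\Lambda A$ (a Schur test with weights $\lam_u^{1/2}$), and that row sum is $\lam_u d_u<\lam_u\Delta<1$. Hence $B$ is positive definite, and this is the only place the hypothesis $\lam_u<1/\Delta$ enters.

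For feasible $\bq$, the constraints and $\bq\ge 0$ give $\blam\cdot\bq\ge\bq\trans B\bq$. On the other hand, for any $\bw\ge 0$, Cauchy--Schwarz in the $B$-inner product gives
\[ \bq\trans B\bq\ \ge\ \frac{(\bw\trans B\bq)^2}{\bw\trans B\bw}\ \ge\ \frac{(\bw\cdot\blam)^2}{\bw\trans B\bw}. \]
Take $\bw=\by$. Then it suffices to prove the single quadratic inequality
\[ \by\trans B\by\le\blam\cdot\by, \]
which is an averaged form of the failed pointwise condition. Expanded, it reads
\[ \sum_u\lam_u(1+\lam_u)y_u^2+2\sum_{uv\in E}\lam_u\lam_v y_uy_v\le\sum_u\lam_u y_u . \]
Apply AM--GM to each edge term, $2\lam_u\lam_v y_uy_v\le\lam_u\lam_v(y_u^2+y_v^2)$. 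Then gather by vertex, charging each term $\lam_u\lam_v y_u^2$ to $u$. This gives the bound
\[ \sum_u\lam_u y_u^2\Bigl(1+\lam_u+\sum_{v\sim u}\lam_v\Bigr) . \]

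\step{Main obstacle} This is exactly where heterogeneity bites: $\sum_{v\sim u}\lam_v$ is not bounded by $d_u\lam_u$. I would first try the weighted AM--GM
\[ 2ab\le t_{uv}a^2+b^2/t_{uv} , \]
with $t_{uv}$ chosen to move mass from high-fugacity to low-fugacity endpoints. The aim is to bound each vertex's charge by $\lam_u y_u\bigl(y_u(1+(d_u+1)\lam_u)\bigr)=\lam_u y_u$.

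If that fails, I would fall back to the eigenvalue bound on $\Lambda^{1/2}A\Lambda^{1/2}$ and adjust the weights $\bw$, keeping the $\by$-form only up to the Cauchy--Schwarz loss. Getting this averaged inequality to close exactly, with no constant-factor loss, is the step I expect to be hardest.
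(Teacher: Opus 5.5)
\textbf{Gap: the inequality in the spectral step is reversed.} From $\bq\ge\bzero$ and $B\bq\ge\blam$ you get
$\bq\trans B\bq=\sum_u q_u(B\bq)_u\ge\blam\cdot\bq$. This is the reverse of the inequality $\blam\cdot\bq\ge\bq\trans B\bq$ that you use. So your Cauchy--Schwarz lower bound on $\bq\trans B\bq$ gives no information about the objective $\blam\cdot\bq$.

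This is not a cosmetic slip. Your quadratic inequality $\by\trans B\by\le\blam\cdot\by$ always holds (see below), so your chain would prove the LP bound whenever $B\succ 0$. Now take the $K_{1,2}$ example of \Cref{sec:limits} with all $\lam_u=7/5$. There $B\succ 0$, because $1+\lam>\sqrt2\,\lam$. Yet $\bq=(5/7,0,0)\trans$ is feasible with $\blam\cdot\bq=1<497/494$, and $\bq\trans B\bq=12/7$. For the same reason, your remark that positive definiteness is ``the only place the hypothesis $\lam_u<1/\Delta$ enters'' is wrong: $B\succ0$ holds in a strictly larger range.

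\textbf{What is missing: a nonnegative dual certificate.} The natural choice is $\mathbf z=B^{-1}\blam$. This is the paper's dual vector $(B_{\mathrm p}+A)^{-1}\bone$, with $B_{\mathrm p}=\diag(1+1/\lam_u)$ and $\Gam=I$, rescaled by $\Lambda^{-1}$. If $\mathbf z\ge\bzero$, then every feasible $\bq$ satisfies $\blam\cdot\bq=\mathbf z\trans B\bq\ge\mathbf z\cdot\blam=\blam\trans B^{-1}\blam$. Only then is your Cauchy--Schwarz step usable, in the form $\blam\trans B^{-1}\blam\ge(\blam\cdot\by)^2/(\by\trans B\by)$.

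Proving $\mathbf z\ge\bzero$ is where $\lam_u<1/\Delta$ genuinely matters. The paper expands $(I+B_{\mathrm p}^{-1}A)^{-1}$ as a Neumann series, pairs the terms of orders $2k$ and $2k+1$, and uses $\sum_{v\sim u}\lam_v/(1+\lam_v)\le 1$. In the $K_{1,2}$ example this certificate has a negative entry at the center.

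\textbf{The step you flagged as the main obstacle is immediate.} Set $a_u=\lam_u y_u$. Then $\blam\cdot\by-\by\trans B\by=\sum_{uv\in E}(a_u-a_v)^2\ge 0$. Equivalently, apply AM--GM as $2a_ua_v\le a_u^2+a_v^2$; this is your weighted version with $t_{uv}=\lam_u/\lam_v$.

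With both repairs, your route becomes a valid alternative to the paper's proof that each term of $\sum_{k\ge1}\bone\trans H^{-1}(LH^{-1})^k\bone$ is nonnegative. In the paper's notation, it amounts to $(H-L)^{-1}\succeq H^{-1}$, which follows from $0\prec H-L\preceq H$.
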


\Cref{thm:multivar-occ-lower-bound} gives the best-known progress towards a conjecture of the first author and Kang~\cite[Conj.~A]{DK25} which states that the bound holds when $\blam=\lam\bone$ for any $\lam>0$. 
In the limit $\lam\to\infty$ the conjecture reduces to the Caro--Wei theorem: $\alpha(G)\ge \sum_{u\in V(G)} \frac{1}{d_u+1}$. Strengthening the conjecture, we believe that the multivariate version should hold for any $\blam$ in the positive orthant.
The bound in \Cref{thm:multivar-occ-lower-bound} is tight by the example of a disjoint union of complete graphs (such that $\blam$ is constant on each component), though we believe that the upper bound on the entries of $\blam$ can be removed.
Integrating the bound, one recovers the lower bound on the multivariate partition function of Lee and the third author~\cite[Thm.~1.1]{LS26}, which in turn generalizes the univariate bound of Sah, Sawhney, Stoner and Zhao~\cite[Thm.~1.7]{SSSZ19}. Those results hold for all (nonnegative real) fugacities, however, so via \Cref{thm:multivar-occ-lower-bound} one only recovers these bounds subject to the stated upper bound on the fugacities.

Further motivation for \Cref{thm:multivar-occ-lower-bound} and our proof technique comes from the local greedy fractional coloring bound of Kelly and Postle~\cite{KP24} and the prospect of generalizing local occupancy to their notion of local demands in fractional coloring. 
It turns out that the upper bound on the entries of $\blam$ prevents us from recovering this result from the proof of \Cref{thm:multivar-occ-lower-bound}. In \Cref{sec:frac} we discuss this and give a generalization of the key lemma that derives fractional coloring bounds from local occupancy that handles the multivariate setting and local demands.

Our second result handles locally sparse graphs with the following natural condition. Suppose that for each vertex $u\in V(G)$ and subgraph $F\subset G[N(u)]$ of the neighborhood of $u$ we have an average degree bound $b$ on the graph $F$. 
We can express this succinctly as saying that the \emph{maximum average degree} $\mad(G[N(u)])$ is bounded by $b$, where 
\[ \mad(H) = \max_{F\subset H, \abs{V(F)}>0} \frac{2\abs{E(F)}}{\abs{V(F)}}. \]
If $b=0$ this assumption is equivalent to assuming that $G$ is triangle-free, and in general the condition implies that each vertex $u$ of $G$ is contained in at most $bd_u/2$ triangles.
Let \(W\colon (0,\infty) \to (0,\infty)\) be the principal branch of the \emph{Lambert \(W\)-function} restricted to \((0,\infty)\), which is the inverse of \(x\mapsto xe^x\) in its range.

\begin{theorem}\label{thm:bdd-loc-mad-occ-lower-bound}
    Let \(G\) be a graph of maximum degree \(\Delta\) with no isolated vertices. Let \(b\in \{0\}\cup[1,\infty)\) and assume \(\mad(G[N(u)])\le b\) for each \(u\in V(G)\). Then there exists \(c=c(b)\in(0,1)\) such that the following holds. Let \(0<\lambda<c/\Delta\) and
    \begin{gather*}
        D_u := d_u (1+\lam)^b \log(1+\lam), \\
        \beta_u := \frac{1+\lam}{\lam} \frac{D_u}{W(D_u) (1+W(D_u))},
        \qquad
        \gam_u := \frac{1+\lam}{\lam} \frac{D_u}{d_u (1+W(D_u))}.
    \end{gather*}
    Then
    \[
        \EE_{G,\lam\bone} \abs{I} \ge \sum_{u\in V(G)} \frac{1}{\beta_u+d_u\gam_u}.
    \]
    In particular, one can choose $c(0)=0.109597$ and for $b\ge 1$, $c(b)=0.0896883/b$.
\end{theorem}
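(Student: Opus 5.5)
We follow the local occupancy framework of~\cite{DJPR17b,DK25}, adapted to degree sequences. For each vertex $u$ we need two local facts. Write $Y_u=|N(u)\cap X|$ and let $U_u$ be the set of uncovered neighbours of $u$ (neighbours with no neighbour in $X\setminus N[u]$). Conditioning on $X\setminus N[u]$ and using the spatial Markov property, the distribution of $X\cap N[u]$ is a hard-core model on $G[U_u\cup\{u\}]$.

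The two facts are then:
\[
\Pr(u\in X)=\lam\,\EE\bigl[Z_{G[U_u]}(\lam)^{-1}\bigr]\ge \frac{\lam}{1+\lam}\,\EE\bigl[(1+\lam)^{-|U_u|}\bigr]\cdot\Bigl(\text{correction}\Bigr),
\qquad
\EE Y_u \ge \frac{\lam}{1+\lam}\,\EE\bigl[|U_u|\,\eta\bigr].
\]
For the first, the mad condition gives $Z_{F}(\lam)\le (1+\lam)^{|V(F)|}$ times a factor. I plan to use the standard bound for graphs of average degree at most $b$: the occupancy of each vertex in $F$ is at least $\frac{\lam}{1+\lam}(1+\lam)^{-b}$. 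In the triangle-free case $b=0$ this reduces to the exact formula $Z=(1+\lam)^{|U|}$.

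Taking logs and applying Jensen, we should obtain a pair of inequalities of the form
\[
\Pr(u\in X)\ge \frac{\lam}{1+\lam}\,e^{-\log(1+\lam)\,\EE|U_u|},
\qquad
\EE Y_u\ge \frac{\lam}{(1+\lam)^{1+b}}\,\EE|U_u|.
\]
Writing $z=\EE|U_u|\in[0,d_u]$, we then need $\beta_u\Pr(u\in X)+\gam_u\EE Y_u\ge 1$. Minimizing $\beta e^{-z\log(1+\lam)}+\gam' z$ over $z\ge 0$ is exactly where the Lambert $W$ with argument $D_u=d_u(1+\lam)^b\log(1+\lam)$ appears. The given $\beta_u,\gam_u$ are precisely the optimal choice making the minimum equal $1$; the calculation is routine, since the minimizer satisfies $z\log(1+\lam)=W(D_u)$.

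To conclude, we need a degree-sequence version of the averaging step. With uniform $\beta,\gam$ one simply sums the vertex inequalities, using $\sum_u \EE Y_u=\sum_v d_v\Pr(v\in X)$. Here the weights differ per vertex, so I would form the weighted combination
\[
\sum_u \alpha_u\bigl(\beta_u\Pr(u\in X)+\gam_u\EE Y_u\bigr)\ge \sum_u\alpha_u .
\]
The left side equals $\sum_v \Pr(v\in X)\bigl(\alpha_v\beta_v+\sum_{u\sim v}\alpha_u\gam_u\bigr)$. We want this coefficient to be at most $1$ for a choice with $\sum_u\alpha_u=\sum_u 1/(\beta_u+d_u\gam_u)$. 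This is a linear program in the $\alpha$'s, and the natural choice is $\alpha_u=1/(\beta_u+d_u\gam_u)$.

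The main obstacle is exactly this step. Neighbours $u\sim v$ may have very different degrees, and $\gam_u$ depends on $d_u$, so the coefficient of $\Pr(v\in X)$ need not be at most $1$. I would try to establish it through monotonicity: $d\gam(d)/(\beta(d)+d\gam(d))$ should behave like a concave increasing function of $d$. Alternatively I would use the spectral/LP-duality analysis the paper applies for \Cref{thm:multivar-occ-lower-bound}, treating the system as a matrix inequality $(B+A\Gamma)\alpha\le\bone$ and controlling it via the smallness of $\lam$. This is where the hypothesis $\lam<c/\Delta$ enters. It makes $\gam_u\approx\beta_u/d_u\cdot(\text{small})$ close to constant multiples, so that perturbing around the Caro--Wei-type solution closes the argument. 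The explicit constants $c(0)=0.109597$ and $c(b)=0.0896883/b$ would come from optimizing this perturbation bound numerically.
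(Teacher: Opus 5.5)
\textbf{There is a genuine gap: the averaging step, which is the real content of the theorem, is not established by either route you offer.}

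\textbf{The local-occupancy half.} This is essentially right; the paper simply cites \cite[Lemma~21]{DKPS20} for it. There is one slip to fix: define $U_u$ via $X\setminus N(u)$ rather than $X\setminus N[u]$. Then $\Pr(u\in X)=\frac{\lam}{1+\lam}\EE[Z_{G[U_u]}(\lam)^{-1}]$ (not $\lam\EE[Z_{G[U_u]}(\lam)^{-1}]$) and $\EE[Y_u\mid X\setminus N(u)]=\EE_{G[U_u],\lam}\abs{I}$ hold exactly. Also, $Z_F(\lam)\le(1+\lam)^{\abs{V(F)}}$ needs no correction factor.

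\textbf{Your first route cannot work.} Write $h_u:=1/(\beta_u+d_u\gam_u)$ and $\tau_u:=\gam_uh_u=\frac{W(D_u)}{d_u(1+W(D_u))}$. The choice $\alpha=H^{-1}\bone$ makes the coefficient of $\Pr(v\in X)$ equal to $1+\sum_{u\sim v}(\tau_u-\tau_v)$. Since $\tau_u$ is a strictly decreasing function of $d_u$, this exceeds $1$ at every maximum-degree vertex that has a neighbour of smaller degree. Such a vertex exists in every connected non-regular graph, and no concavity of $d\mapsto d\tau(d)$ changes this. So $H^{-1}\bone$ is never dual feasible in the interesting case, and a different dual vector is needed.

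\textbf{Your second route names the paper's method but omits every step that makes it work.} The paper takes $\by'=(B+A\Gam)^{-1}\bone$. This is dual feasible by \Cref{lem:gLOdualm-feasible-solution} once one checks $\gam_u/\beta_u=W(D_u)/d_u<1/\Delta$; this is where $c<1$ (resp.\ $c<1/b$) is used. The paper then writes $\bone\trans\by'-\bone\trans H^{-1}\bone=\sum_{k\ge1}S_k$, with $S_k=\bh\trans(LT)^k\bone$, $\bh=H^{-1}\bone$ and $T=\Gam H^{-1}$. Since $\Gam$ is not a multiple of $I$, $S_k$ is not a PSD quadratic form as in \Cref{thm:multivar-occ-lower-bound}, and terms can be negative. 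Moreover, $S_1$ is only of order $\lam^4\cE_d$, where $\cE_d=\sum_{uv\in E(G)}(d_u-d_v)^2$. This can be tiny for nearly regular graphs, and every $S_k$ vanishes on regular graphs. So the tail must be bounded by a small multiple of the same irregularity quantity, and smallness of $\lam$ alone does not give this.

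The two missing ideas are:
\begin{enumerate}[(i)]
\item $S_1=\sum_{uv\in E(G)}(h_u-h_v)(\tau_u-\tau_v)\ge0$, because $h$ and $\tau$ are both decreasing in the degree. Lower bounds of order $\lam^2$ on $-h'$ and $-\tau'$ then give $S_1\ge\frac{2\lam^4e^{-4C}}{(1+C)^8}\cE_d$, with $C=c(1+c)^b$.
\item Set $M=T^{1/2}LT^{1/2}\succeq0$, $\bu=T^{-1/2}\bh$ and $\bv=T^{1/2}\bone$. Then $S_k=\bu\trans M^k\bv$ and $M^k\preceq\opnorm{M}^{k-1}M$. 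Cauchy--Schwarz in the form induced by $M^k$ gives $\abs{S_k}\le\opnorm{M}^{k-1}(\bh\trans L\bh\cdot\btau\trans L\btau)^{1/2}\le(2C)^{k-1}\,2\lam^4(1+c)^{(3b-1)_+}\cE_d$. Here $(x)_+=\max\{x,0\}$, and the bound uses upper bounds of order $\lam^2$ on $-h'$ and $-\tau'$ together with $\opnorm{M}\le\max_u\tau_u\opnorm{L}<2C$.
\end{enumerate}
A crude bound such as $\opnorm{M}^k\norm{\bu}\norm{\bv}$ loses the factor $\cE_d$ and is much larger than $S_1$ for nearly regular graphs.

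The constants $c(0)$ and $c(b)$ are exactly the thresholds at which $e^{-4C}(1+C)^{-8}\ge(1+c)^{(3b-1)_+}\frac{2C}{1-2C}$, that is, where (i) dominates the sum of (ii) over $k\ge2$. Without these estimates there is nothing to optimize numerically.
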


This result is inspired by a lower bound due to Shearer~\cite{She91} on the independence number of a triangle-free graph in terms of its degree sequence, though due to the upper bound on $\lam$ our result does not give comparable bounds on the independence number. 
For the case \(b=0\) of triangle-free graphs, integrating the bound in \Cref{thm:bdd-loc-mad-occ-lower-bound} along a straight line from $\blam=\bzero$ and using standard properties of the Lambert $W$-function~\cite{CGH+96}, we obtain the lower bound
\[ \log Z_G(\blam) \ge \sum_{u\in V(G)}\frac{1}{2d_u}\left[W(d_u\log(1+\lam))^2 + 2W(d_u\log(1+\lam))\right]. \]
This can be compared to a recent result of Buys, van den Heuvel and Kang~\cite{BHK25} who gave a lower bound on $\log Z_G(\blam)$ in terms of the average degree for the univariate case $\blam=\lam\bone$ with $\lam\in[0,1]$.
\Cref{thm:multivar-occ-lower-bound,thm:bdd-loc-mad-occ-lower-bound} give degree-sequence generalizations of previous occupancy fraction bounds~\cite{DdKP21,DJPR17b,DK25}.
Again, we are motivated by fractional coloring and note that a method for removing the upper bound on $\lam$ would likely yield an alternative proof of the local fractional Shearer bound of Martinsson and Steiner~\cite{MS25}. We discuss this further below.

\subsection{Techniques}

A subset of the authors used induction and local occupancy to prove a univariate version of \Cref{thm:multivar-occ-lower-bound} subject to the stronger condition $\lam\le O(1/\Delta^2)$~\cite{DST25}. 
Similarly, they established the case $b=0$ of \Cref{thm:bdd-loc-mad-occ-lower-bound} up to $\lam \le O(1/\Delta^4)$. 
Our analysis significantly simplifies theirs and gives stronger results due to relaxed upper bounds on the fugacity and, in the case of \Cref{thm:multivar-occ-lower-bound}, the generalization to the multivariate case.
One of our technical contributions is to streamline the method, obviating the need for induction and some of the mundane Taylor approximation of the previous work.
While the third author~\cite{SeoNote} showed that the induction technique gives the univariate version of \Cref{thm:multivar-occ-lower-bound} up to $\lam = c/\Delta$ for some absolute constant $c$, the analysis is not straightforward and we do not see how to get $c$ close to $1$.

The cluster expansion is a general technique for handling hard-core models under suitable ``small fugacity'' assumptions. 
The method corresponds to a combinatorial interpretation of the multivariate Taylor expansions (in $\blam$) of functions such as $\log Z_G(\blam)$ and $\Pr_{G,\blam}(u\in X)$. In these expansions, the coefficient terms of order $k$ in $\blam$ correspond to counts of connected subgraphs on at most $k$ vertices in $G$. 
For more details, see the survey of Jenssen~\cite{Jen24}.
Analogous to the work of the first author, Jenssen and Perkins~\cite{DJP21} and a follow-up due to Borbényi and Csikvári~\cite{BC21}, one might expect that the cluster expansion can be used to prove our main theorems. 
For \Cref{thm:multivar-occ-lower-bound}, one could use the fact that a disjoint union of complete graphs maximizes the number of triangles subject to a given degree sequence, and for \Cref{thm:bdd-loc-mad-occ-lower-bound} one could compare terms of order $4$ with the Taylor expansion of the lower bound.
In the univariate case $\blam = \lam\bone$, tight general results on the radius of convergence of the cluster expansion~\cite{She85,SS05} would limit such an analysis to the smaller range
\[ \lam < \frac{(\Delta-1)^{\Delta-1}}{\Delta^\Delta} \sim \frac{1}{e\Delta}. \]
Even then, to get a result up to $\lam = \Omega(\Delta^{-1})$ requires understanding arbitrarily many terms of the expansion, and for triangle-free graphs the fact that given two different graphs one can only see a gap at terms of order at least $4$ means that naive tail bounds limit the analysis to $\lam < O(\Delta^{-2})$.

Our main technical contribution is to generalize local occupancy for the hard-core model to the multivariate setting, i.e.\ where $\blam$ is not necessarily a scalar multiple of $\bone$. For this, we make the following natural definition (cf.~\cite{DK25} where a hierarchy of such conditions of various strengths is discussed).

\begin{definition}
    Given a graph $G$ and $\bbeta,\bgam,\blam\in[0,\infty)^{V(G)}$, we say that $G$ has local $(\bbeta,\bgam,\blam)$-occupancy if, for each vertex $u\in V(G)$ it holds that
\[ \beta_u\Pr_{G,\blam}(X_u) + \gam_u \sum_{v\in N_G(u)}\Pr_{G,\blam}(X_v) \ge 1, \]
where $X_u$ is the indicator random variable for the event that $u$ is occupied.
\end{definition}

There are a number of settings in which tight or asymptotically tight local occupancy parameters can be found that, despite the inherent locality of the method, give rise to best-known or essentially tight bounds on global properties of $G$.

Given a graph $G$ and $\bbeta,\bgam,\blam\in(0,\infty)^{V(G)}$ such that $G$ has local $(\bbeta,\bgam,\blam)$-occupancy, we can write $x_v=\Pr_{G,\blam}(X_v)$ and observe that the definition of local occupancy means that the vector $\bx$ is feasible in the following linear program (LP).
\begin{equation}\label{e:gLO}
\begin{aligned}
    &\min \sum_{u\in V} x_u,\\
    &0 \le x_u &&\forall u\in V\\
    &\beta_u x_u + \gam_u \sum_{v\in N(u)} x_v \ge 1 && \forall u\in V
\end{aligned}
\end{equation}
It is convenient to express this in compact form using matrices. Let $B=\diag(\bbeta)$, $\Gam=\diag(\bgam)$ and let $A$ be the adjacency matrix of $G$. Then the general local occupancy LP is 
\begin{equation}\label{e:gLOm}
\begin{aligned}
    &\min \bone\trans \bx,\\
    &\bzero \le \bx\\
    &(B+\Gam A)\bx \ge \bone.
\end{aligned}
\end{equation}
Note that the objective $\bone\trans \bx$ is exactly $\EE_{G,\blam} \abs{X}$. 
It may be that feasible $\bx$ do not correspond to a hard-core model on a graph, but nonetheless one of the key ideas of local occupancy is to use this LP to give a lower bound on expectations in the hard-core model. 
Versions of this idea for maximization are also of interest. 

The first cases considered were $d$-regular graphs or graphs of maximum degree $\Delta$ in the simple setting where $\bbeta,\bgam,\blam$ are parallel to $\bone$.
In such cases we can exploit symmetry, project the LP to one dimension, and solve for the minimum $x>0$ such that $\beta x + \Delta\gam x \ge 1$ for reals $\beta,\gam>0$, obtaining $x=(\beta+\Delta\gam)^{-1}$. 
This gives rise to the fundamental graph theory problem involving local occupancy: find parameters such that local occupancy holds and $\beta+\Delta\gam$ is minimized.
Our main contribution is to give a novel and general perspective on the LP which allows us to handle bounds involving the degree sequence of $G$ and cases where we do not assume that $\bbeta,\bgam,\blam$ are parallel to $\bone$. 
The analysis brings aspects of spectral graph theory to bear on local occupancy in the hard-core model. 

We defer details of our results on fractional coloring to \Cref{sec:frac}, but note here that the method involves an LP analogous to the dual of the above LP, except that the constraint matrix is not transposed when taking the dual. 
For the required background on linear programming, see e.g.\ the textbook of Schrijver~\cite{Sch03}.

\subsection{Organization}
In \Cref{sec:spectral} we develop a general understanding of local occupancy in the multivariate setting and show how to exploit the main LP~\eqref{e:gLOm} via spectral properties of the constraint matrix derived from the graph and bounds on the entries of $\blam$.
In the two subsequent sections we apply this analysis to prove \Cref{thm:multivar-occ-lower-bound,thm:bdd-loc-mad-occ-lower-bound}.
In \Cref{sec:frac} we turn to local fractional coloring and observe that developing an understanding of this via local occupancy is analogous to solving the dual of LP~\eqref{e:gLOm} (though the constraint matrix involved is the original constraint matrix $B+\Gam A$ rather than the transpose that one would expect from taking the dual).
Finally, in \Cref{sec:limits} we show that to improve substantially upon our results requires more than the LP~\eqref{e:gLOm} by giving an example which shows that the relaxation to the LP is too loose.

\section{A spectral understanding of multivariate local occupancy}\label{sec:spectral}

Suppose that a fixed graph \(G=(V,E)\) on $n$ vertices and \(\bbeta,\bgam,\blam\in(0,\infty)^{V}\) are given such that $G$ has local $(\bbeta,\bgam,\blam)$-occupancy. 
We write $X$ for a random independent set in $G$ distributed according to the hard-core model at fugacity $\blam$. For each \(u\in V\), we use \(\beta_u\) to denote the entry of \(\bbeta\) which corresponds to \(u\), and similarly denote entries of vectors indexed by \(V\).

We may assume that \(G\) is connected and \(n>1\) because $\log Z_G(\blam)$ and $\EE_{G,\blam}\abs{X}$ are additive over the connected components of $G$ and the hard-core model on $K_1$ is trivial.

Throughout, we let \(A\) be the adjacency matrix of \(G\), and let \(D\) be the diagonal matrix indexed by $V\times V$ with $D_{uu}=d_u$ for each \(u\in V(G)\), and as is standard we write \(d_u\) for the degree of \(u\). Let
\[
    B:= \diag(\bbeta),
    \qquad
    \Gam := \diag(\bgam),
    \qquad
    H := B+D\Gam,
\]
and let \(L:=D-A\) be the combinatorial Laplacian of \(G\). Note that by the assumption \(\beta_u,\gam_u>0\) for all \(u\in V(G)\), and that \(G\) has no isolated vertices, the matrices \(B\), \(\Gam\), \(D\), and \(H\) are invertible.

We recall some basic facts from linear algebra. For a real square matrix \(T\), let \(\rho(T)\) be its spectral radius (i.e.\ the maximum absolute value of any eigenvalue of \(T\)). Let \(\opnorm{T}:= \sup_{\bx\neq\bzero} \norm{T\bx}_2 / \norm{\bx}_2\) be the operator norm of \(T\), which satisfies \(\rho(T) \le \opnorm{T}\). Note that if \(T\) is symmetric, \(\rho(T) = \opnorm{T} = \sup_{\bx\neq\bzero} \abs{\bx\trans T \bx} / \norm{\bx}_2^2\).
We also record a proposition on the Neumann series of a matrix. For this result and further basic facts from linear algebra, we refer the reader to a textbook~\cite{HJ85}.

\begin{proposition}\label{prop:Neumann-series}
    Let \(T\) be a real square matrix with \(\rho(T)<1\). Then \(I-T\) is invertible and the inverse can be represented as a Neumann series of $T$ convergent in the operator norm:
    \[
        (I-T)^{-1} = \sum_{k\ge0} T^k.
    \]
\end{proposition}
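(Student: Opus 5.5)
The plan is to reduce convergence of the series to a geometric decay estimate for \(\opnorm{T^k}\), and then identify the limit as the inverse of \(I-T\) by telescoping. Write \(S_N := \sum_{k=0}^N T^k\). For every \(N\) we have the purely algebraic identities
\[
    (I-T)S_N = S_N(I-T) = I - T^{N+1}.
\]
So it will suffice to show two things: that \(\opnorm{T^{N+1}}\to 0\), and that \(S_N\) converges in operator norm to some matrix \(S\). Passing to the limit then gives \((I-T)S = S(I-T) = I\). This shows at once that \(I-T\) is invertible and that \((I-T)^{-1}=S=\sum_{k\ge0}T^k\).

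The key step, and the only nontrivial one, is to show that there are a constant \(C\) and some \(r<1\) with \(\opnorm{T^k}\le C r^k\) for all \(k\ge 0\). The hypothesis is only \(\rho(T)<1\), not \(\opnorm{T}<1\), so we cannot simply use submultiplicativity of the norm. I would first try the quickest route, Gelfand's formula \(\lim_k \opnorm{T^k}^{1/k}=\rho(T)\). Fixing \(r\) with \(\rho(T)<r<1\), it gives \(\opnorm{T^k}\le r^k\) for all \(k\) large enough, and enlarging \(C\) covers the finitely many remaining \(k\).

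If I want to avoid quoting Gelfand's formula, I would argue directly via the Jordan form over \(\mathbb{C}\). Write \(T=PJP^{-1}\), so that \(T^k = PJ^kP^{-1}\). Each Jordan block of size \(m\le n\) with eigenvalue \(\mu\) satisfies \(\abs{\mu}\le\rho(T)\), and its \(k\)-th power has entries \(\binom{k}{j}\mu^{k-j}\) for \(0\le j<m\). These entries are bounded in absolute value by \(k^{n}\rho(T)^{k-n}\) once \(k\ge n\). Hence \(\opnorm{T^k}\le C' k^n\rho(T)^{k}\), which is at most \(C r^k\) for any fixed \(r\in(\rho(T),1)\). The case \(\rho(T)=0\), where \(T\) is nilpotent, is trivial.

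Given the decay estimate, the tail of the series satisfies \(\sum_{k>N}\opnorm{T^k}\le C r^{N+1}/(1-r)\to0\). Thus \((S_N)\) is Cauchy in the complete space of \(n\times n\) matrices with the operator norm, so it converges to some \(S\). The estimate also gives \(\opnorm{T^{N+1}}\to0\) directly, and the telescoping identity completes the proof.

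I expect the main obstacle to be precisely the passage from the spectral radius to the norm decay. The rest is routine.
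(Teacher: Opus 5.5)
Your argument is correct and is the standard proof: geometric decay $\opnorm{T^k}\le Cr^k$ for some $r\in(\rho(T),1)$, obtained from Gelfand's formula or the Jordan form, followed by completeness and the telescoping identity $(I-T)S_N=I-T^{N+1}$. The paper gives no proof of this proposition and simply cites the textbook of Horn and Johnson. There the decay step is packaged differently, by constructing a submultiplicative matrix norm with $\norm{T}<1$ so that plain submultiplicativity applies, but the content is the same as yours.
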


We also record a proposition giving a multivariate version of known general local occupancy parameters. The proof is identical to the univariate setting, but we give it for completeness. For the univariate proof and details on the spatial Markov property, see the survey of the first author and Kang~\cite{DK25}.

\begin{proposition}\label{prop:genLO}
    Let $G=(V,E)$ be a graph and for $\blam\in(0,\infty)^{V}$ set $\beta_u = 1+1/\lam_u$ and $\bgam=\bone$. Then $G$ has local $(\bbeta,\bgam,\blam)$-occupancy.
\end{proposition}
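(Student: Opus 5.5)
We need to show, for each vertex $u$,
\[
\Bigl(1+\frac{1}{\lam_u}\Bigr)\Pr_{G,\blam}(X_u)+\sum_{v\in N(u)}\Pr_{G,\blam}(X_v)\ge 1 .
\]
The plan is to condition on the random set of \emph{uncovered} neighbors of $u$ and use the spatial Markov property, exactly as in the univariate argument. Let $Y=N(u)\setminus N(X\setminus(N(u)\cup\{u\}))$ be the set of neighbors of $u$ that have no occupied neighbor outside $N[u]$. Conditioning on the configuration of $X$ outside $N[u]$ determines $Y$. By the spatial Markov property, the conditional law of $X\cap N[u]$ is then the hard-core model at fugacity $\blam$ on $G[\{u\}\cup Y]$.

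The first step computes the two conditional quantities given $Y=F$. Write $Z_F=Z_{G[F]}(\blam)$. The configurations on $\{u\}\cup F$ are either $\{u\}$ alone, with weight $\lam_u$, or any independent set of $G[F]$. Hence
\[
\Pr(X_u\mid Y=F)=\frac{\lam_u}{\lam_u+Z_F}.
\]
Also $\Pr(X_u=0\mid Y=F)=Z_F/(\lam_u+Z_F)$, and on this event $X\cap F$ is hard-core on $G[F]$. So
\[
\EE\bigl[\abs{X\cap N(u)}\mid Y=F\bigr]=\frac{Z_F}{\lam_u+Z_F}\cdot\frac{\sum_{v\in F}\lam_v\,\partial_{\lam_v}Z_F}{Z_F}.
\]
This uses $X\cap N(u)\subseteq Y$, since occupied neighbors must be uncovered.

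The second step is the pointwise bound. The numerator $\sum_{v}\lam_v\partial_{\lam_v}Z_F$ equals the weighted sum of $\abs{J}$ over independent sets $J$ of $G[F]$. Since $\abs{J}\ge 1$ for $J\neq\emptyset$, this is at least $Z_F-1$. Therefore
\[
\Bigl(1+\frac{1}{\lam_u}\Bigr)\frac{\lam_u}{\lam_u+Z_F}+\frac{Z_F-1}{\lam_u+Z_F}=\frac{\lam_u+1+Z_F-1}{\lam_u+Z_F}=1 .
\]
So for every $F$, the conditional expectation of $(1+1/\lam_u)X_u+\abs{X\cap N(u)}$ is at least $1$. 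Taking expectation over $Y$ and using linearity, $\sum_{v\in N(u)}\Pr(X_v)=\EE\abs{X\cap N(u)}$ gives the claim.

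The only delicate point is stating the spatial Markov property correctly in the multivariate setting. I do not expect real difficulty here, since the hard-core measure factorizes over vertex weights. Conditioning on the outside configuration leaves a product-weight measure on the unblocked part of $N[u]$. This is again a multivariate hard-core model with the same $\lam_v$, so every step of the univariate proof goes through verbatim.
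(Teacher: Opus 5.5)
Your proof is correct, but it is organized differently from the paper's. You condition on the set $F$ of uncovered neighbors of $u$ and identify the conditional law on $\{u\}\cup F$ as a hard-core model. You then show that $(1+1/\lambda_u)X_u+\abs{X\cap N(u)}$ has conditional expectation at least $1$ for every $F$, using $\sum_J \abs{J}\prod_{v\in J}\lambda_v\ge Z_F-1$.

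The paper never conditions on the outside configuration. It sets $Y=\abs{N(u)\cap X}$ and applies the spatial Markov property only at the single vertex $u$, which gives $\Pr(u\in X)=\frac{\lambda_u}{1+\lambda_u}\Pr(Y=0)$. It then uses $\EE Y\ge \Pr(Y\ge 1)=1-\Pr(Y=0)$, which holds because $Y$ is integer-valued. Adding the two gives $\Pr(Y=0)+1-\Pr(Y=0)=1$.

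Your two conditional quantities are exactly these two facts given $F$, since $\Pr(Y=0\mid F)=(1+\lambda_u)/(\lambda_u+Z_F)$. So your argument is the paper's argument carried out fiberwise and then averaged.

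The paper's version is shorter and needs less: no description of the conditional law on $N[u]$ and no partition functions $Z_F$. Yours buys an inequality that holds pointwise in $F$. That pointwise form is the template for sharper local occupancy parameters, for example in triangle-free graphs, where $G[F]$ is edgeless and $Z_F$ is explicit. It also makes the equality case transparent: equality forces every $G[F]$ of positive probability, in particular $G[N(u)]$, to be a clique. This matches the paper's remark on tightness.
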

\begin{proof}
    Fix $u\in V$ and let $Y=\abs{N(u)\cap X}$. Then by the spatial Markov property we have 
    \[ \Pr_{G,\blam}(u\in X) = \frac{\lam_u}{1+\lam_u}\Pr(Y=0) \]
    and by Markov's inequality and the fact that $Y$ takes values in the nonnegative integers we have 
    \[ \EE_{G,\blam}\abs{N(u)\cap X}=\EE Y \ge 1-\Pr(Y=0). \] 
    Note that this is tight if and only if $Y$ is supported on $\{0,1\}$, which holds if and only if $N(u)$ is a clique in $G$. Our choice of parameters now means that 
    \[ \beta_u\Pr_{G,\blam}(u\in X) + \gam_u\sum_{v\in N(u)}\Pr_{G,\blam}(v\in X) \ge \Pr(Y=0) + 1 - \Pr(Y=0) =1.\qedhere \]
\end{proof}

To prove \Cref{thm:multivar-occ-lower-bound,thm:bdd-loc-mad-occ-lower-bound}, our goal is the following. For a graph \(G\) which has local \((\bbeta,\bgam,\blam)\)-occupancy, we recall the LP~\eqref{e:gLOm}:
\begin{equation*}
\begin{aligned}
    &\min \bone\trans \bx,\\
    &\bzero \le \bx\\
    &(B+\Gam A)\bx \ge \bone.
\end{aligned}
\end{equation*}
We prove that the LP has optimum value at least \(\sum_{u \in V(G)} \frac{1}{\beta_u+d_u\gam_u} = \bone\trans H^{-1} \bone\).
We proceed in the natural way, showing that some dual-feasible vector has optimum at least $\bone\trans H^{-1} \bone$ in the dual LP.

The dual LP is
\begin{equation}\label{e:gLOdualm}
\begin{aligned}
    &\max \bone\trans \by,\\
    &\bzero \le \by\\
    &(B+A\Gam)\by \le \bone,
\end{aligned}
\end{equation}
and the goal is equivalent to finding a feasible solution \(\by'\) of \eqref{e:gLOdualm} such that
\begin{equation}\label{eq:y'-entry-sum-lower-bd}
    \bone\trans\by' \ge \bone\trans H^{-1} \bone.
\end{equation}

Before we give our novel method, we point out that the formalism developed thus far allows a neat interpretation of existing proofs. 
If $G$ is $d$-regular and $\bbeta=\beta\bone$, $\bgam=\gam\bone$, $\blam=\lam\bone$ then $\bone$ is an eigenvector of $A$ of eigenvalue $d$, and hence $\bone$ is an eigenvector of the constraint matrix $B+A\Gam$ with eigenvalue $\beta + d\gam$. Then we can take $\by'=\bone/(\beta+d\gam)$ and see that $\by'$ is dual-feasible with objective $\bone\trans H^{-1} \bone$.
The case of a graph $G$ of maximum degree $\Delta$ is slightly more complex as one has to use nonnegativity of the entries of $A$ and $B+A\Gam$, but essentially the same argument shows that 
$\by'=\bone/(\beta+\Delta\gam)$ is dual-feasible.
Together with a suitable analysis of local occupancy parameters in different graph classes~\cite{DJKP20,DdKP21,DKPS20}, this sketch encompasses most applications of local occupancy to the problem of giving lower bounds on the expected size $\EE_{G,\blam}\abs{X}$ of an independent set from the hard-core model. 
A notable exception is the work of Perarnau and Perkins~\cite{PP18} which requires a more general understanding of local occupancy based on larger portions of $G$ than neighborhoods.

To prove \Cref{thm:multivar-occ-lower-bound,thm:bdd-loc-mad-occ-lower-bound}, we show that $B+A\Gam$ is invertible under our assumptions and choose \(\by'=(B+A\Gam)^{-1}\bone\). \Cref{lem:gLOdualm-feasible-solution} states that given the conditions on the local occupancy parameters, this \(\by'\) is well-defined and is a feasible solution of \eqref{e:gLOdualm}. It also provides a bound on the spectral radius of \(L\Gam H^{-1}\), which will be used later to show \eqref{eq:y'-entry-sum-lower-bd}.

\begin{lemma}\label{lem:gLOdualm-feasible-solution}
    Let $G$ be a graph with no isolated vertices. Suppose \(G\) has local $(\bbeta,\bgam,\blam)$-occupancy such that for all $u\in V(G)$,
\begin{equation}\label{eq:cond-gLOdualm-feasible-solution}
    \beta_u,\gam_u,\lam_u>0,
    \qquad
    d_u \frac{\gam_u}{\beta_u} <1,
    \qquad
    \sum_{v\in N(u)} \frac{\gam_v}{\beta_v}\le1.
\end{equation}
Then \(B+A\Gam\) is invertible and \((B+A\Gam)^{-1}\bone\) is a feasible solution of \eqref{e:gLOdualm}. Moreover, \(\rho(L\Gam H^{-1})<1\).
\end{lemma}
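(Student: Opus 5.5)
The plan is to factor the constraint matrix as \(B+A\Gam=(I+T)B\) with \(T:=A\Gam B^{-1}\), which is entrywise nonnegative with \(T_{uv}=\gam_v/\beta_v\) for \(uv\in E\). The two degree conditions in \eqref{eq:cond-gLOdualm-feasible-solution} then give column- and row-sum bounds on \(T\). The column-sum bound controls the spectral radius, and the row-sum bound gives nonnegativity of the candidate dual solution.

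\step{Invertibility} Column \(v\) of \(T\) sums to \(d_v\gam_v/\beta_v<1\). Hence the induced \(\ell_1\) operator norm satisfies \(\norm{T}_1<1\), and so \(\rho(T)<1\). By \Cref{prop:Neumann-series} applied to \(-T\), the matrix \(I+T\) is invertible with \((I+T)^{-1}=\sum_{k\ge0}(-T)^k\). Since \(B\) is invertible (all \(\beta_u>0\)), \(B+A\Gam\) is invertible. Moreover
\[
\by':=(B+A\Gam)^{-1}\bone=B^{-1}\bz,
\qquad \bz:=\sum_{k\ge0}(-1)^kT^k\bone .
\]

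\step{Feasibility} The constraint \((B+A\Gam)\by'\le\bone\) holds with equality. It remains to show \(\by'\ge\bzero\), or equivalently \(\bz\ge\bzero\) since \(B^{-1}\) is a positive diagonal matrix.
\begin{itemize}
\item Row \(u\) of \(T\) sums to \(\sum_{v\in N(u)}\gam_v/\beta_v\le1\), so \(T\bone\le\bone\).
\item Since every power \(T^k\) is entrywise nonnegative, this gives \(T^{k+1}\bone=T^k(T\bone)\le T^k\bone\).
\item So the vectors \(T^k\bone\) are entrywise nonincreasing in \(k\), and they tend to \(\bzero\) because \(\rho(T)<1\).
\item Group the convergent alternating series in consecutive pairs: \(\bz=\sum_{j\ge0}\bigl(T^{2j}\bone-T^{2j+1}\bone\bigr)\). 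Each pair is entrywise nonnegative, so \(\bz\ge\bzero\).
\end{itemize}
This pairing argument is the only step needing any care. The point to note is that the row-sum condition is what makes the alternating Neumann series monotone, whereas the column-sum condition was only used for convergence.

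\step{Spectral radius of \(L\Gam H^{-1}\)} Compute the entries directly:
\[
(L\Gam H^{-1})_{uu}=\frac{d_u\gam_u}{\beta_u+d_u\gam_u},
\qquad
(L\Gam H^{-1})_{uv}=-\frac{\gam_v}{\beta_v+d_v\gam_v}\ \text{ for } uv\in E,
\]
and all other entries are zero. The absolute column sum of column \(v\) is therefore
\[
\frac{2d_v\gam_v}{\beta_v+d_v\gam_v},
\]
which is strictly less than \(1\) exactly when \(d_v\gam_v<\beta_v\), i.e.\ by the hypothesis \(d_v\gam_v/\beta_v<1\). Hence \(\norm{L\Gam H^{-1}}_1<1\), and since the spectral radius is bounded by any induced matrix norm, \(\rho(L\Gam H^{-1})<1\).
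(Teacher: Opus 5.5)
Your proof is correct. The nonnegativity step is the paper's argument in different coordinates. The paper writes $(B+A\Gam)^{-1}\bone=\sum_{k\ge0}(B^{-1}A\Gam)^{2k}(I-B^{-1}A\Gam)B^{-1}\bone$ and uses the row-sum condition to get $(I-B^{-1}A\Gam)B^{-1}\bone\ge\bzero$. Since $B^{-1}A\Gam=B^{-1}TB$, this is exactly your $B^{-1}\sum_{j\ge0}T^{2j}(I-T)\bone$.

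Where you genuinely differ is in the two spectral-radius bounds.

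The paper symmetrizes in both cases:
\begin{itemize}
\item It conjugates $B^{-1}A\Gam$ to $WAW$ with $W=B^{-1/2}\Gam^{1/2}$ and uses $-D\preceq A\preceq D$.
\item It conjugates $L\Gam H^{-1}$ to $R^{1/2}\mathcal{L}R^{1/2}$, where $\mathcal{L}=D^{-1/2}LD^{-1/2}$ is the normalized Laplacian, and cites $\opnorm{\mathcal{L}}\le 2$.
\end{itemize}

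You instead bound the induced $\ell_1$ norm, i.e.\ the maximum absolute column sum. Your choice to put $B^{-1}$ on the right, $T=A\Gam B^{-1}$, is what makes this clean: the two hypotheses in \eqref{eq:cond-gLOdualm-feasible-solution} become precisely the column sums and the row sums of one nonnegative matrix.

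The numerical bounds come out identical. The first is $\max_v d_v\gam_v/\beta_v$, and the second is $\max_v 2r_v$ with $r_v=d_v\gam_v/(\beta_v+d_v\gam_v)$. Your version, however, needs no similarity transforms, no Loewner-order or PSD reasoning, and no spectral fact about $\mathcal{L}$. It also never inverts $D$, so the no-isolated-vertices hypothesis is not needed for the lemma.

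The paper's symmetric viewpoint is the one it reuses later. The proof of \Cref{thm:multivar-occ-lower-bound} uses positive semidefiniteness of $H^{-1/2}LH^{-1/2}$, and the proof of \Cref{thm:bdd-loc-mad-occ-lower-bound} uses a Cauchy--Schwarz bound for the PSD matrix $(\Gam H^{-1})^{1/2}L(\Gam H^{-1})^{1/2}$. But those arguments derive their own bounds and take only $\rho(L\Gam H^{-1})<1$ from the lemma, which your argument supplies in full.
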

\begin{proof}
    We first show that \(B+A\Gam\) is invertible. Since \(B\) is invertible and \(B+A\Gam = B(I+B^{-1}A\Gam)\), it suffices to show that \(I+B^{-1}A\Gam\) is invertible. This follows from \(\rho(B^{-1}A\Gam)<1\), which we show in the following claim.

    \begin{claim}\label{clm:B+AGamma-invertible}
        \(\rho(B^{-1}A\Gam)<1\).
    \end{claim}
    \begin{claimproof}
        Let \(Q:= B^{-1}A\Gam\) and \(T:= (B\Gam)^{1/2}\). Then,
        \[
            S := T Q T^{-1}
            = (B\Gam)^{1/2} Q (B\Gam)^{-1/2}
            = B^{-1/2} \Gam^{1/2} A B^{-1/2} \Gam^{1/2},
        \]
        so \(S\) is symmetric. Also, \(S\) is similar to \(Q\), so
        \[
            \rho(Q) = \rho(S) = \opnorm{S}.
        \]
        Let \(W:= B^{-1/2} \Gam^{1/2}\) so that \(S=WAW\). Since \(L=D-A\) and \(D+A\) are PSD, as the latter is the signless Laplacian, we have \(-D \preceq A\preceq D\), which gives
        \[
            -WDW \preceq S = WAW \preceq WDW.
        \]
        Hence, for every \(\bx\in\RR^{V(G)}\),
        \[
            \abs{\bx\trans S \bx} \le \bx\trans WDW \bx \le \opnorm{WDW} \norm{\bx}_2^2.
        \]
        It follows that, since \(S\) is symmetric and \(WDW\) is diagonal with entries \(d_u\gam_u/\beta_u\),
        \[
            \opnorm{S}
            \le \opnorm{WDW}
            = \max_{u\in V(G)} d_u \frac{\gam_u}{\beta_u}
            < 1.
        \]
        Therefore, \(\rho(Q)<1\) as desired.
    \end{claimproof}
    
    Let \(\by':=(B+A\Gam)^{-1}\bone\) and note that the constraint $(B+A\Gam)\by'\le 1$ is trivially satisfied with equality. Next, we show \(\by'\ge\bzero\), which completes the proof that \(\by'\) is dual-feasible.

    \begin{claim}
        \(\by'\ge\bzero\).
    \end{claim}
    \begin{claimproof}
        \((B+A\Gam)^{-1}\) has a convergent series representation
        \[
            (B+A\Gam)^{-1}
            = (I + B^{-1}A\Gam)^{-1}B^{-1}
            = \sum_{k\ge 0}(-B^{-1}A\Gam)^kB^{-1}
            = \sum_{k\ge 0}(B^{-1}A\Gam)^{2k}(I-B^{-1}A\Gam)B^{-1}.
        \]
        Here, the first series is a standard Neumann series of \((I+B^{-1}A\Gam)^{-1}\) where we are using \(\rho(B^{-1}A\Gam)<1\) given by \Cref{clm:B+AGamma-invertible}. The second series follows from \((-B^{-1}A\Gam)^{2k} + (-B^{-1}A\Gam)^{2k+1} = (B^{-1}A\Gam)^{2k}(I-B^{-1}A\Gam)\) for each \(k\ge0\). From this,
        \[
            \by'
            = (B+A\Gam)^{-1} \bone
            = \sum_{k\ge0} (B^{-1}A\Gam)^{2k} (I-B^{-1}A\Gam) B^{-1} \bone.
        \]
        Since \(B^{-1}A\Gam\) is entrywise nonnegative, to show that \(\by'\ge\bzero\), it suffices to show that \((I-B^{-1}A\Gam) B^{-1} \bone \ge \bzero\). Indeed, the entry of \((I-B^{-1}A\Gam) B^{-1} \bone\) which corresponds to each \(u\in V(G)\) is
        \[
            \frac{1}{\beta_u} \Biggl( 1-\sum_{v\in N(u)} \frac{\gam_v}{\beta_v} \Biggr) \ge0. \qedhere
        \]
    \end{claimproof}

    \noindent
    To complete the proof of \Cref{lem:gLOdualm-feasible-solution}, we show that \(\rho(L\Gam H^{-1})<1\).

    \begin{claim}
        \(\rho(L\Gam H^{-1})<1\).
    \end{claim}
    \begin{claimproof}
        Let \(K:=\Gam H^{-1}\). Since \(K\) is diagonal with positive diagonal, \(K^{1/2}\) is well-defined. Let
        \[
            M
            := K^{1/2} L K^{1/2}
            = K^{1/2} (L\Gam H^{-1}) K^{-1/2}.
        \]
        Then \(\rho(L\Gam H^{-1}) = \rho(M) \le \opnorm{M}\).
        Let \(\mathcal{L} = D^{-1/2} L D^{-1/2}\) be the normalized Laplacian of \(G\), which is PSD and \(\opnorm{\mathcal{L}}\le2\); see e.g.~\cite[Lemma 1.7]{Chu96}.
        For each \(u\in V(G)\), let \(r_u:=d_u\gam_u/(\beta_u+d_u\gam_u)\), and let \(R:=\diag(r_u:u\in V(G)) = D \Gam H^{-1} = DK\). Then
        \(
            M
            = R^{1/2} \mathcal{L} R^{1/2}
        \),
        whence
        \[
            \opnorm{M}
            \le \opnorm{R^{1/2}}^2 \, \opnorm{\mathcal{L}}
            \le 2 \max_{u\in V(G)} r_u.
        \]
        The assumption \(d_u\gam_u/\beta_u<1\) gives \(r_u = 1/(\beta_u/(d_u\gam_u)+1) < 1/2\) for all \(u\in V(G)\), whence \(\opnorm{M} < 1\).
        Therefore, \(\rho(L\Gam H^{-1}) < 1\) as desired.
    \end{claimproof}
    

        

    \noindent
    This completes the proof of \Cref{lem:gLOdualm-feasible-solution}.
\end{proof}

\section{Multivariate setting for general graphs}

In this section, we prove \Cref{thm:multivar-occ-lower-bound} using \Cref{lem:gLOdualm-feasible-solution} and the local occupancy parameters of \Cref{prop:genLO}. We may assume \(\blam\in(0,\infty)^{V(G)}\), since the full case of \(\blam\in[0,\infty)^{V(G)}\) can be covered using a simple limiting argument.
Recall that for $\blam\in(0,\infty)^{V(G)}$ if we take \(\bbeta:=(1 + 1/\lam_u:u\in V(G))\) and \(\bgam:=\bone\), then \(G\) has local \((\bbeta,\bgam,\blam)\)-occupancy. 

\begin{proof}[Proof of \Cref{thm:multivar-occ-lower-bound}]
We start by observing that the assumption $\lam_u < 1/\Delta$ for all $u$ means that the parameters satisfy~\eqref{eq:cond-gLOdualm-feasible-solution}. This follows from the fact that \(\gam_u/\beta_u < \lam_u < 1/\Delta\) for all \(u\in V(G)\).
Applying \Cref{lem:gLOdualm-feasible-solution}, it suffices to show that \(\by':=(B+A\Gam)^{-1} \bone\) of \eqref{e:gLOdualm} satisfies
\[
    \bone\trans \by' \ge \bone\trans H^{-1} \bone.
\]

Using \(B+A\Gam = H-L\Gam\) and \(\rho(L\Gam H^{-1})<1\) from \Cref{lem:gLOdualm-feasible-solution}, we see that the Neumann series for \((I-L\Gam H^{-1})^{-1}\) is well-defined, which gives
\begin{equation}\label{eq:gLOdualm-feasible-solution-Neumann-series}
    \by' = (B+A\Gam)^{-1}\bone = H^{-1}(I-L\Gam H^{-1})^{-1}\bone = H^{-1}\bone + H^{-1}\sum_{k\ge 1}(L\Gam H^{-1})^k \bone.
\end{equation}
So it remains to show that for each \(k\ge1\),
\[
    \bone\trans H^{-1} (L\Gam H^{-1})^k \bone \ge 0.
\]
Since \(\Gam=I\), we have
\[
    \bone\trans H^{-1} (L\Gam H^{-1})^k \bone
    = (H^{-1/2}\bone)\trans \, (H^{-1/2}  L  H^{-1/2})^k \, (H^{-1/2}\bone),
\]
and \(H^{-1/2} L H^{-1/2}\) is PSD as \(L\) is. This completes the proof.
\end{proof}

\section{Univariate setting for bounded local mad graphs}

In this section, we prove \Cref{thm:bdd-loc-mad-occ-lower-bound} on graphs with bounded local mad, again using \Cref{lem:gLOdualm-feasible-solution}.
In the proof of \Cref{thm:multivar-occ-lower-bound}, we show that the series \(\sum_{k\ge1} \bone\trans H^{-1} (L\Gam H^{-1})^k \bone\) is termwise nonnegative, which is not true in general for the local occupancy parameters in \Cref{thm:bdd-loc-mad-occ-lower-bound}. Instead, we show that the first term of the series is positive and dominates the remaining series. Throughout the proof, we frequently use \(0<W(x)<x\) for \(x>0\), which follows from \(x=W(x)e^{W(x)} > W(x)\).

\begin{proof}[Proof of \Cref{thm:bdd-loc-mad-occ-lower-bound}]

Recall that for each \(u\in V(G)\),
\begin{gather*}
    D_u := d_u (1+\lam)^b \log(1+\lam), \\
    \beta_u := \frac{1+\lam}{\lam} \frac{D_u}{W(D_u) (1+W(D_u))},
    \qquad
    \gam_u := \frac{1+\lam}{\lam} \frac{D_u}{d_u (1+W(D_u))}.
\end{gather*}
Let \(\bbeta:=(\beta_u:u\in V)\) and \(\bgam:=(\gam_u:u\in V)\), then \(G\) has local \((\bbeta,\bgam,\lam\bone)\)-occupancy by~\cite[Lemma~21]{DKPS20}. Let \(0<\lambda<c/\Delta\) where \(c\in(0,1)\) and \(c<1/b\) if \(b\ge1\).

First, we check that the parameters satisfy \eqref{eq:cond-gLOdualm-feasible-solution}, for which it suffices to show that \(\gam_u/\beta_u = W(D_u)/d_u\) is less than \(1/\Delta\) for all \(u\in V(G)\).
If \(b=0\), then from \(\log(1+\lambda) < \lambda <1/\Delta\),
\[
    \frac{W(D_u)}{d_u}
    = \frac{D_u}{e^{W(D_u)} d_u}
    = \frac{\log(1+\lam)}{e^{W(D_u)}}
    < \frac{1}{\Delta}.
\]
Suppose now that \(b\ge1\).
Since \(W\) is strictly increasing, \(W(D_u)/d_u < 1/\Delta\) is equivalent to
\[
    D_u < W^{-1}(d_u/\Delta) =  \frac{d_u}{\Delta} e^{d_u/\Delta}.
\]
Substituting \(D_u=d_u(1+\lam)^b\log(1+\lam)\) reduces this to
\[
    (1+\lam)^b \log(1+\lam) < \frac{1}{\Delta} e^{d_u/\Delta},
\]
which holds since \(\lam<1/(b\Delta)\) by the assumption so that
\[
    (1+\lam)^b \log(1+\lam)
    \le e^{b\lam} \lam
    < e^{1/\Delta} \frac{1}{b\Delta}
    \le e^{1/\Delta} \frac{1}{\Delta}
    \le \frac{1}{\Delta} e^{d_u/\Delta}.
\]

Applying \Cref{lem:gLOdualm-feasible-solution}, it suffices to show that \(\by':=(B+A\Gam)^{-1} \bone\) satisfies
\[
    \bone\trans \by' \ge \bone\trans H^{-1} \bone,
\]
where recall \(H=B+D\Gam\). Analogously to \eqref{eq:gLOdualm-feasible-solution-Neumann-series},
\[
    \by' = (B+A\Gam)^{-1}\bone = H^{-1}(I-L\Gam H^{-1})^{-1}\bone = H^{-1}\bone + H^{-1}\sum_{k\ge 1}(L\Gam H^{-1})^k \bone.
\]
So it remains to show that
\begin{equation}
    \sum_{k\ge1} \bone\trans H^{-1} (L\Gam H^{-1})^k \bone \ge 0.
\end{equation}
Let \(S_k:=\bone\trans H^{-1} (L\Gam H^{-1})^k \bone\). As promised in the beginning of the section, we lower bound \(S_1\), upper bound the absolute value of \(\sum_{k\ge2} S_k\), and compare them to obtain the desired inequality \(\sum_{k\ge1} S_k \ge 0\).

\step{Basics} Let \(T:=\Gam H^{-1}\) and \(\bh:=H^{-1}\bone\) so that \(S_k=\bh\trans (LT)^k \bone\). Let \(s:=(1+\lam)^b \log(1+\lam)\). Then \(\lam(1+\lam)^{-1} < \log(1+\lam) < \lam\) gives
\begin{equation}\label{eq:s-lambda-estimate}
    \lam(1+\lam)^{-1} < s < \lam (1+\lam)^b.
\end{equation}
Let \(\btau\) be such that \(T=\diag(\btau)\), and \(w(x):=W(sx)\) so that \(W(D_u)=w(d_u)\). Define the functions \(h\) and \(\tau\) as follows:
\begin{alignat*}{2}
    &h_u
    = \frac{1}{\beta_u+d_u\gam_u}
    = \frac{\lambda}{1+\lambda} \frac{W(D_u)}{D_u}
    = h(d_u),
    &\qquad
    &h(x):=\frac{\lambda}{(1+\lambda)s} \frac{w(x)}{x};
    \\
    &\tau_u
    = \frac{\gam_u}{\beta_u+d_u\gam_u}
    = \frac{W(D_u)}{d_u(1+W(D_u))}
    = \tau(d_u),
    &&\tau(x):=\frac{w(x)}{x(1+w(x))}.
\end{alignat*}

\step{Estimate \(h'\) and \(\tau'\)}
The standard derivative formula for \(W\) gives \(W'(z) = \frac{W(z)}{z(1+W(z))}\), whence
\(
    w'(x)
    = s W'(sx)
    = \frac{w(x)}{x(1+w(x))}
\).
Using this, one can compute that
\[
    h'(x)
    = -\frac{\lam}{(1+\lam)s} \frac{w(x)^2}{x^2(1+w(x))},
    \qquad
    \tau'(x)
    = -\frac{w(x)^2(2+w(x))}{x^2(1+w(x))^3}.
\]
Suppose that $1\le x\le \Delta$. Let \(C:=c (1+c)^b\) so that \(C \ge c > \lam\) and
\[
    0< w(x) = W(sx) < sx\le s\Delta
    < \lam(1+\lam)^b \cdot \Delta
    < c(1+\lam)^b
    \le C,
\]
which in particular gives
\begin{equation}\label{eq:w(x)-bound}
    w(x)/x < s,
    \qquad
    w(x) < C,
    \qquad
    s\Delta < C.
\end{equation}
Also, \(w(x)/x > s e^{-C}\) as
\[
    w(x)
    = W(sx)
    = sx e^{-W(sx)}
    = sx e^{-w(x)}
    > sxe^{-C},
\]
and \(s > \lam(1+\lam)^{-1} > \lam (1+C)^{-1}\) by \eqref{eq:s-lambda-estimate}.
Using these together with \eqref{eq:s-lambda-estimate},
\begin{align*}
    -h'(x)
    &> \frac{\lam}{1+\lam} \frac{s e^{-2C}}{1+C}
    > \frac{\lam^2}{(1+\lam)^2} \frac{e^{-2C}}{1+C}
    > \frac{\lam^2 e^{-2C}}{(1+C)^3}, \\
    -h'(x)
    &< \frac{\lam}{1+\lam} \frac{1}{s} \biggl(\frac{w(x)}{x}\biggr)^2
    < \frac{\lam}{1+\lam} s
    < \lam^2 (1+\lam)^{b-1}, \\
    -\tau'(x)
    &= \biggl(\frac{w(x)}{x}\biggr)^2 \frac{2+w(x)}{(1+w(x))^3}
    > (\lam(1+C)^{-1} e^{-C})^2 \frac{2}{(1+C)^3}
    = \frac{2\lam^2 e^{-2C}}{(1+C)^5}, \\
    -\tau'(x)
    &< 2s^2
    < 2 \lam^2 (1+\lam)^{2b}.
\end{align*}
Thus,
\begin{equation}\label{eq:bdd-loc-mad-h'-tau'-estimate}
    \frac{\lam^2 e^{-2C}}{(1+C)^3}
    < -h'(x)
    < \lam^2 (1+\lam)^{b-1},
    \qquad
    \frac{2\lam^2 e^{-2C}}{(1+C)^5}
    < -\tau'(x)
    < 2 \lam^2 (1+\lam)^{2b}.
\end{equation}
In particular, \(h\) and \(\tau\) are strictly decreasing.

\step{Bound \(\opnorm{T^{1/2}LT^{1/2}}\)}
\eqref{eq:w(x)-bound} gives
\[
    \tau(x)
    = \frac{w(x)}{x(1+w(x))}
    < \frac{w(x)}{x}
    < s,
\]
so \(\tau_u=\tau(d_u)<s\) for all \(u\in V(G)\). Thus,
\begin{equation}\label{eq:LT-T-norm-upper-bound}
    \opnorm{T^{1/2} L T^{1/2}}
    \le \opnorm{T} \opnorm{L}
    \le \max_u \tau_u \cdot 2\Delta
    < s\cdot 2\Delta
    < 2C.
\end{equation}
Here, the second inequality uses \(\opnorm{T}=\max_u \tau_u\) and the standard fact that \(\opnorm{L}\le 2\Delta\) which follows from \(\opnorm{L}\le \opnorm{D-A} \le \opnorm{D}+\opnorm{A}\),
and the last inequality uses \(s\Delta<C\) from \eqref{eq:w(x)-bound}.

\step{Estimate \(S_1\)}
Write
\[
    S_1
    = \bh\trans L \btau
    = \sum_{uv\in E(G)} (h_u-h_v)(\tau_u-\tau_v).
\]
Here and throughout, \(\sum_{uv\in E(G)}\) denotes the sum over unordered edges of \(G\).
Since \(h\) and \(\tau\) are decreasing, \(h_u-h_v\) and \(\tau_u-\tau_v\) have the same sign. By the mean value theorem and \eqref{eq:bdd-loc-mad-h'-tau'-estimate},
\begin{equation}\label{eq:bdd-loc-mad-S1-estimate}
    S_1
    = \sum_{uv\in E(G)} \abs{h_u-h_v} \abs{\tau_u-\tau_v}
    \ge \sum_{uv\in E(G)} \frac{\lam^2 e^{-2C}}{(1+C)^3} \abs{d_u-d_v} \cdot \frac{2\lam^2 e^{-2C}}{(1+C)^5} \abs{d_u-d_v}
    = \frac{2\lam^4 e^{-4C}}{(1+C)^8} \cE_d,
\end{equation}
where \(\cE_d:=\sum_{uv\in E(G)} (d_u-d_v)^2\).

\step{Estimate \(\sum_{k\ge2} S_k\)}
Let
\[
    M := T^{1/2}LT^{1/2}, \qquad
    \bu := T^{-1/2}\bh, \qquad
    \bv := T^{1/2}\bone.
\]
Since \(L\) is PSD, so is \(M\). We claim that for each \(k\ge1\)
\begin{equation}\label{eq:u-M^k-v-ineq}
    \abs{\bu\trans M^k \bv}
    \le \opnorm{M}^{k-1} \bigl( \bu\trans M \bu \, \bv\trans M \bv \bigr)^{1/2}.
\end{equation}
To see this, let \(M=Q \diag(\bmu) Q\trans\) be the spectral decomposition of \(M\) where \(Q\) is orthogonal. Since \(M\) is PSD, \(\bmu=(\mu_v : v\in V(G))\) satisfies \(0\le \mu_v\le \rho(M) = \opnorm{M}\) for each \(v\). Then
\[
    M^k
    = Q \diag(\mu_v^k : v\in V(G)) Q^T
    \preceq Q ( \opnorm{M}^{k-1} \diag(\bmu) ) Q^T
    = \opnorm{M}^{k-1} M.
\]
The Cauchy--Schwarz inequality for the PSD form induced by \(M^k\) gives
\[
    \abs{\bu\trans M^k \bv}^2
    \le (\bu\trans M^k \bu) (\bv\trans M^k \bv),
\]
and applying \(M^k \preceq \opnorm{M}^{k-1} M\) to each factor derives \eqref{eq:u-M^k-v-ineq}.

Now observe \(M=T^{1/2}(LT)T^{-1/2}\), and hence \(M^k=T^{1/2}(LT)^kT^{-1/2}\). Therefore
\[
    S_k
    = \bh\trans (LT)^k \bone
    = \bu\trans M^k \bv.
\]
Moreover,
\[
    \bu\trans M \bu = \bh\trans L \bh,
    \qquad
    \bv\trans M \bv = \bone\trans TLT\bone = \btau\trans L \btau,
\]
where we used \(T\bone=\btau\). Thus, by \eqref{eq:u-M^k-v-ineq},
\[
    \abs{S_k}
    \le \opnorm{M}^{k-1} ( \bh\trans L \bh \, \btau\trans L \btau )^{1/2}.
\]
By the mean value theorem and \eqref{eq:bdd-loc-mad-h'-tau'-estimate},
\[
    \bh\trans L \bh
    = \sum_{uv\in E(G)} (h_u-h_v)^2
    \le \sum_{uv\in E(G)} ( \lam^2(1+\lam)^{b-1} )^2 (d_u-d_v)^2
    = (\lam^2 (1+\lam)^{b-1})^2 \cE_d,
\]
and similarly
\[
    \btau\trans L \btau
    \le (2\lam^2(1+\lam)^{2b})^2 \cE_d.
\]
Combining the above inequalities together with \(\opnorm{M}<2C\) from \eqref{eq:LT-T-norm-upper-bound},
\begin{align*}
    \abs{S_k}
    &\le \opnorm{M}^{k-1} ( \bh\trans L \bh \, \btau\trans L \btau )^{1/2}
    \\&\le (2C)^{k-1} \cdot \lam^2(1+\lam)^{b-1} \cdot 2\lam^2(1+\lam)^{2b} \cdot \cE_d
    \\&= (2C)^{k-1} 2\lam^4 (1+\lam)^{3b-1} \cE_d
    \\&\le (2C)^{k-1} 2\lam^4 (1+c)^{(3b-1)_+} \cE_d,
\end{align*}
where \((3b-1)_+ := \max\{3b-1,0\}\).
Thus, assuming \(C<1/2\) (which will follow from the eventual choice of \(c\)),
\begin{equation}\label{eq:bdd-loc-mad-remaining-series-estimate}
    \abs[\bigg]{\sum_{k\ge2} S_k}
    \le 2\lam^4 (1+c)^{(3b-1)_+} \frac{2C}{1-2C} \cE_d.
\end{equation}

\step{Estimate \(\sum_{k\ge1} S_k\)}
Combining \eqref{eq:bdd-loc-mad-S1-estimate} and \eqref{eq:bdd-loc-mad-remaining-series-estimate},
\[
    \sum_{k\ge1} S_k
    \ge 2\lam^4 \biggl( \frac{e^{-4C}}{(1+C)^8}
        - (1+c)^{(3b-1)_+} \frac{2C}{1-2C} \biggr) \cE_d.
\]
Thus, it suffices to have
\[
    f(c) := \frac{e^{-4C}}{(1+C)^8} \ge (1+c)^{(3b-1)_+} \frac{2C}{1-2C} =: g(c).
\]
Since \(C=c(1+c)^b\) is strictly increasing in \(c\), in the range of \(c\) where \(C<1/2\), \(f(c)\) is strictly decreasing while \(g(c)\) is strictly increasing. In addition, both \(f\) and \(g\) are continuous, \(f(0)=1>0=g(0)\), and \(g(c)\to\infty\) as \(C\to1/2\). Thus, there is a unique admissible \(c\) such that \(f(c)=g(c)\).

\medskip

Finally, we estimate \(c(b)\).

\step{Case \(b=0\)} The condition becomes
\[
    \frac{e^{-4C}}{(1+C)^8} = \frac{2C}{1-2C},
\]
whose solution is \(C\approx 0.1095972\). Since \(C=c(1+c)^b = c\), the optimal \(c\) is \(\approx 0.1095972\).

\step{Case \(b\ge1\)} The condition becomes
\[
    \frac{e^{-4C}}{(1+C)^8} = (1+c)^{3b-1} \frac{2C}{1-2C}.
\]
Set \(c=\eta/b\) where \(\eta>0\) will be chosen later. Then
\[
    bc \le \eta,
    \qquad
    (1+c)^b \le e^{bc} \le e^\eta,
    \qquad
    C = c(1+c)^b \le \eta e^\eta.
\]
Clearly, the right-hand side is increasing on \(0<C<1/2\).
The left-hand side is decreasing on \(C>0\), since for \(x>0\),
\[
    \frac{d}{dx} \frac{e^{-4x}}{(1+x)^8}
    = -\frac{4(x+3)e^{-4x}}{(1+x)^9} < 0.
\]
Thus, it suffices to have
\[
    \frac{e^{-4u}}{(1+u)^8} \ge e^{3\eta} \frac{2u}{1-2u},
    \qquad
    u:=\eta e^\eta,
\]
and solving this gives \(\eta\approx 0.08968838\). Note that if \(\eta<0.08968838\), then \(C\le \eta e^\eta < 1/2\).
\end{proof}


\section{Fractional coloring}\label{sec:frac}

The main result of this section gives a generalization of the main fractional coloring result derived via local occupancy, the case $r=1$ of~\cite[Lemma 3]{DJKP20}. 
We also give the generalization to fractional coloring with local demands in the sense of Kelly and Postle~\cite{KP24}.
The idea traces back to Molloy and Reed~\cite[\S21.3]{MR02} who studied a uniform random maximum independent set (the limit as a univariate fugacity tends to infinity). The first author, de Joannis de Verclos, Kang and Pirot~\cite{DJKP20} gave the generalization to arbitrary finite univariate fugacity, and they noted a certain locality property that is distinct from the local demands definition of Kelly and Postle. 
We define these concepts below and connect them to local occupancy.

Given a graph $G=(V,E)$, let $\ind(G)\subset \RR^V$ be the \emph{independence polytope} given by the convex hull of the indicator vectors of independent sets in $G$. That is, for each independent set $I\in\cI(G)$ we let $\bx^I$ be given by $\bx^I_v=1$ for $v\in I$ and $\bx^I_v=0$ otherwise, and set $\ind(G)$ to be the convex hull of the $\bx^I$. 
Clearly, points in $\ind(G)$ correspond to probability distributions on the independent sets of $G$. 
A fractional $k$-coloring of $G$ is a vector $\bq\in\ind(G)$ such that for each vertex $v\in V$ we have $q_v\ge 1/k$. 
Kelly and Postle~\cite{KP24} gave a natural local version of this and defined, for a function $f:V\to [0,1]\cap\QQ$, a \emph{fractional coloring with demand $f$} to be a vector $\bq\in \ind(G)$ such that for each $v$ we have $q_v\ge f(v)$.
As an example, they gave several proofs of the ``local fractional greedy bound'' which states that for all graphs $G$, the vector $\bq$ with $q_v = \frac{1}{d_v+1}$ is in $\ind(G)$.
Via local occupancy, we describe a simple linear condition of dimension $\abs{V}$ that suffices for a vector to be in $\ind(G)$.

\begin{theorem}\label{thm:LOdemands}
    Let $G=(V,E)$ be a graph and suppose that there exist $\bbeta,\bgam,\blam\in(0,\infty)^V$ such that all induced subgraphs $H\subset G$ have local $(\bbeta,\bgam,\blam)$-occupancy. 
    Let $B=\diag(\bbeta)$, $\Gam=\diag(\bgam)$, and let $A$ be the adjacency matrix of $G$.
    Then for any vector $\bq\ge \bzero$ such that $(B+\Gam A)\bq \le \bone$ we have $\bq\in\ind(G)$.
\end{theorem}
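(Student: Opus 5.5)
Membership in $\ind(G)$ is a statement about a polytope, so the plan is to argue by LP duality / separation, in the style of Molloy--Reed and \cite[Lemma 3]{DJKP20}. The polytope $\ind(G)$ is down-closed in the nonnegative orthant. Hence a vector $\bq\ge\bzero$ lies in $\ind(G)$ if and only if for every weight vector $\bw\ge\bzero$ we have
\[ \bw\trans\bq\le \max_{I\in\cI(G)} \bw\trans\bx^I . \]
Equivalently, by Farkas / minimax, there is a probability distribution on $\cI(G)$ whose marginals dominate $\bq$. So fix $\bw\ge\bzero$, not identically zero. It suffices to exhibit a random independent set $X$ with $\EE\sum_v w_vX_v\ge \bw\trans\bq$. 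Then some independent set $I$ attains $\bw\trans\bx^I\ge\bw\trans\bq$.

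\textbf{Choice of the random set.} Let $U=\{v: w_v>0\}$ and $H=G[U]$. Take $X\sim\Pr_{H,\blam|_U}$, the hard-core model on $H$ with the restricted fugacities; this is an independent set of $G$. By hypothesis $H$ has local $(\bbeta,\bgam,\blam)$-occupancy. Writing $x_u=\Pr(u\in X)$ for $u\in U$, we get for each $u\in U$
\[ 1\le \beta_u x_u+\gam_u\sum_{v\in N(u)\cap U}x_v . \]

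\textbf{Combining the inequalities.} Multiply the $u$-th inequality by $w_uq_u\ge0$ and sum over $u\in U$:
\[ \sum_{u\in U} w_uq_u \le \sum_{u\in U} w_u q_u\beta_u x_u+\sum_{u\in U}\sum_{v\in N(u)\cap U} w_uq_u\gam_u x_v . \]
Swapping the order in the double sum, the coefficient of $x_v$ becomes
\[ w_vq_v\beta_v+\sum_{u\in N(v)\cap U}\gam_u w_u q_u . \]
We would like this coefficient to be at most $w_v$. That would give $\bw\trans\bq\le\sum_v w_vx_v=\EE\,\bw\trans X$, as desired.

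\textbf{Main obstacle.} The hypothesis $(B+\Gam A)\bq\le\bone$ reads $\beta_vq_v+\gam_v\sum_{u\sim v}q_u\le1$. The weights, however, appear at the wrong indices: the neighbour terms carry $w_u$ rather than $w_v$. This mismatch is exactly the non-transposed-constraint phenomenon flagged earlier in the paper.

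The first thing I would try is to reweight. Multiply the $u$-th occupancy inequality by a different nonnegative multiplier $\theta_u$ instead of $w_uq_u$, chosen so that the combined coefficient satisfies $\beta_v\theta_v+\sum_{u\sim v}\gam_u\theta_u\le w_v$. In other words, I want $\btheta\ge\bzero$ with $(B+A\Gam)\btheta\le \bw$ and $\bone\trans\btheta$ large. That is the dual LP~\eqref{e:gLOdualm} weighted by $\bw$, and so may not directly match $\bq$.

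Failing that, I would instead replace $\bw$ by a worst-case (extremal) weight vector. I would then use the freedom to pick the induced subgraph $H$ more cleverly, for instance restricting to the vertices where the normalized weight is maximal. The inequality only needs to be checked at extreme rays of the polar cone, and on such a restricted set the ratios $w_u/w_v$ can be controlled. Concretely, I would order the vertices by $w_v$, peel them into level sets $U_t=\{v:w_v\ge t\}$, and apply the occupancy argument to $G[U_t]$ with all weights equal to $1$. In that case the coefficient computation gives exactly $\beta_vq_v+\gam_v\sum_{u\sim v}q_u\le1$ when the roles of $\gam_u$ and $\gam_v$ agree, which they do once the argument is arranged to match $\Gam A$. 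Finally, I would integrate over $t$ (layer-cake) to recover general $\bw$.

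Verifying that this layer-cake step correctly handles the $\gam$ index, given the hypothesis is stated for $\Gam A$ and not $A\Gam$, is where I expect the real difficulty to lie.
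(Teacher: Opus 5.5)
Your reduction to weighted inequalities via down-closedness of $\ind(G)$ is fine, but the proposal stops at the crux, and neither suggested repair works.

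\emph{A single hard-core measure cannot work.} Multiplying the occupancy inequalities by $\bm{\theta}\ge\bzero$ against one fixed hard-core measure always gives $\bone\trans\bm{\theta}\le\bx\trans(B+A\Gam)\bm{\theta}$, which involves the transposed matrix. No choice of multipliers rescues this, because a single hard-core measure on $G[\mathrm{supp}\,\bw]$ need not certify the inequality at all. Take $K_2$ with the parameters $\bbeta=(1+1/\lam)\bone$, $\bgam=\bone$ of \Cref{prop:genLO}, which are valid on every induced subgraph. Then $\bq=(\lam/(1+\lam),0)$ satisfies $(B+\Gam A)\bq\le\bone$. Yet for $\bw=(1,\epsilon)$ with $\epsilon<\lam/(1+\lam)$,
\[
\EE_{K_2,\lam\bone}[\bw\trans X]=\frac{(1+\epsilon)\lam}{1+2\lam}<\frac{\lam}{1+\lam}=\bw\trans\bq .
\]

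\emph{The layer-cake step fails for two reasons.} First, with unit weights on $U_t$ the coefficient of $x_v$ is still $\beta_vq_v+\sum_{u\in N(v)\cap U_t}\gam_uq_u$. The $\gam$-indices do not ``agree'' unless $\bgam$ is constant. Second, even then you use a different random set $X_t$ at each level. That only gives
\[
\bw\trans\bq\le\int_0^\infty\EE\abs{X_t}\,dt\le\int_0^\infty\alpha(G[U_t])\,dt ,
\]
and this can exceed $\max_I\bw\trans\bx^I$. On $K_{1,2}$ with weight $2$ at the centre and $1$ at the leaves, the integral is $3$, but every independent set has weight at most $2$. So no single independent set is produced.

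\emph{The missing idea} is to construct one distribution, independent of $\bw$, that adapts to $\bq$. The paper runs a piecewise-linear continuous-time process. Each $I\in\cI(G)$ accrues weight at rate $\mu_{G[U_t]}(I)$, where $U_t$ is the set of vertices whose accumulated marginal is still below $q_v$; total mass grows at rate $1$. One then fixes a vertex $u$ and integrates \emph{its own} occupancy inequality in $G[U_t]$ over $t\in[0,T_u)$, where $T_u$ is the time at which $u$ is satisfied. A neighbour $v$ stops accruing once it leaves $U_t$, so its integrated contribution is at most $q_v$. Hence
\[
T_u\le\beta_uq_u+\gam_u\sum_{v\in N(u)}q_v\le 1 .
\]
Integrating over time for a fixed $u$, rather than summing over $u$ against a fixed distribution, is what attaches $\gam_u$ to the centre vertex and produces $\Gam A$ rather than $A\Gam$. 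It is also where the hypothesis on all induced subgraphs is genuinely used, since the active set shrinks. In the $K_2$ example, vertex $2$, having $q_2=0$, is never active, and vertex $1$ receives marginal exactly $\lam/(1+\lam)$. By time $1$ every demand is met with total mass at most $1$. This gives $\bq\in\ind(G)$ directly, with no duality argument.
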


Given local occupancy, this result reduces the problem of finding a fractional coloring with demand $f$ to showing that the vector $\bq$ with $q_v=f(v)$ satisfies the constraint $(B+\Gam A)\bq\le \bone$. 
This is straightforward when $\bone$ is an eigenvector of $B+\Gam A$, which holds when $G$ is $d$-regular, and when $G$ has maximum degree $\Delta$ we choose parameters such that $(B+\Gam A)\bone \le \max_{v\in V}(\beta_v+\Delta\gam_v) \bone$.
Prior results~\cite{DJKP20,DdKP21,DKPS20} essentially proceed in this manner. 
Analogous to the proofs in preceding sections, subject to a condition on $\blam$ that makes $B+\Gam A$ invertible, one can compute $(B+\Gam A)^{-1}\bone$ to obtain fractional colorings from local occupancy. We omit the details, however, as the restriction on $\blam$ we make to push this through seems to preclude interesting consequences for the independence polytope of $G$.

\begin{proof}[Proof of \Cref{thm:LOdemands}]
    It is convenient to write $\mu$ for the hard-core model on $G$ at fugacity $\blam$, and for an (induced) subgraph  $H\subset G$ we write $\mu_{H}$ for the hard-core model on $H$ with fugacity obtained from $\blam$ by deleting entries corresponding to vertices not in $H$. We also extend the notation so that 
    \[ \mu_{H}(v) = \sum_{I\in \cI(H) : v\in I} \mu_{H}(I) \]
    is the marginal of a vertex $v$.
    
    For continuous time $t\ge 0$, consider a weight vector $\bw_t\in[0,\infty)^{\cI(G)}$ defined as follows. Initialize $\bw_0=0$ and write $w^I_t$ for the component of $\bw_t$ corresponding to the independent set $I\in\cI(G)$. 
    For each vertex $v\in V$ let $\bw_t(v) = \sum_{I\ni v}w^I_t$ be the weight of independent sets containing $v$, and define $\bw_t(G) = \sum_{I\in\cI(G)}w_t^I$ to be the total weight. 
    Write $U_t$ for the ``active set'' of vertices $v$ such that $\bw_t(v)< \bq_v$. Then $U_0 = V$.
    
    It is convenient to suppose that $\bw_t$ evolves with time according to the differential equation
    \[ \frac{d\bw_t^I}{dt} = \mu_{G[U_t]}(I). \]
    The right-hand side is not continuous, however, and we prove that there is a solution by constructing it manually following discrete time steps\footnote{This is analogous to previous discrete-time versions of the proof~\cite{MR02,DJKP20}, though we find the continuous-time intuition makes the proof more approachable.}.
    Intuitively, while a vertex $v$ is in $U_t$ its weight is monotone increasing with rate bounded away from zero since 
    \[ \frac{d \bw_t(v)}{dt} = \sum_{I\ni v}\frac{dw^I_t}{dt} = \sum_{I\ni v} \mu_{G[U_t]}(I) = \mu_{G[U_t]}(v) \ge \frac{\lam_v}{Z_G(\blam)}. \]
    Thus, we can construct each $w_t^I$ as a piecewise-linear function with finitely many discontinuities. 
    
    Fix $I\in\cI(G)$. Starting from \(w_0^I=0\) and \(t=0\), suppose that \(w_{t_j}\) and \(U_{t_j}\) have been defined such that $U_{t_j}\ne\emptyset$. On a time interval beginning at \(t_j\), define
    \[
        w_t^I=w_{t_j}^I+(t-t_j)\mu_{G[U_{t_j}]}(I).
    \]
    For \(v\in U_{t_j}\), as justified above, the rate of increase of \(w_t(v)\) is strictly positive, while vertices outside \(U_{t_j}\) have rate \(0\). Hence there is a first time \(t_{j+1}>t_j\) at which some vertex of \(U_{t_j}\) reaches its target value \(q_v\).
    At time \(t_{j+1}\) the cardinality of the active set decreases by at least one. Since \(V(G)\) is finite, this construction takes finitely many steps until we reach $U_t=\emptyset$ and all weights are subsequently constant. This process therefore defines a piecewise-linear absolutely continuous function \(w_t\), satisfying for almost every $t$
    \[
        \frac{d w_t^I}{dt}=\mu_{G[U_t]}(I).
    \]
    For each vertex $v$, let $T_v$ be the first time at which $\bw_t(v)=q_v$, and let $T$ be the first time at which $U_t=\emptyset$. By the above arguments, these times exist and are finite.
    
    Note that by linearity and the fact that probabilities sum to $1$ we have for all $t<T$
    \[ \frac{d\bw_t(G)}{dt} = 1.\]
    Then $\bw_1(G)\le 1$ and hence there is a probability distribution $\nu$ on $\cI(G)$ with $\nu(I)\ge w_1^I$. 
    To complete the proof is equivalent to showing that the marginals $\nu(v)$ are at least $q_v$, which is implied by the fact $T_u\le 1$ for all $u$ that we establish below.

    For all $t\in[0,T)$, the local occupancy property in the induced subgraph $G[U_t]$ of $G$ gives for all $u\in U_t$ that
    \[ \beta_u\frac{d \bw_t(u)}{dt} + \gam_u\sum_{v\in N(u)\cap U_t}\frac{d \bw_t(v)}{dt} \ge 1. \]
    Fixing $u\in V$ and integrating the above inequality over $t\in[0,T_u)$ we have 
    \begin{equation}\label{eq:Tubound}
        \beta_u q_u + \gam_u\sum_{v\in N(u)} q_v \ge \beta_uq_u + \gam_u\sum_{v\in N(u)}\bw_{\min\{T_u,T_v\}}(v) \ge T_u.
    \end{equation}
    Hence it suffices to have for each $u$ that
    \[ \beta_uq_u + \gam_u\sum_{v\in N(u)}q_v \le 1. \]
    This is the condition $(B+\Gam A)\bq \le \bone$ from the statement of the theorem.
\end{proof}

There may be substantial slack in the final inequality in~\eqref{eq:Tubound} in cases where neighbors $v$ of $u$ have $T_v < T_u$. 
It appears hard to avoid this loss with these methods, and this lends credence to the idea that local occupancy performs better when we have homogeneous behavior, e.g.\ of marginals. 

Note that a previous result deriving fractional colorings from local occupancy~\cite[Lem.~3]{DJKP20} handles a case of uniform demands $\bq=q \bone$ where $q=1/\max_{u\in V(G)}\{\beta_u + \gam_u d_u\}$, but gives an additional guarantee that is equivalent (in our setup) to the statement that $T_u \le (\beta_u + d_u\gam_u)q\le 1$. Since our proof is a generalization of those techniques, one can easily guarantee that property here---see~\eqref{eq:Tubound}. We prefer to emphasize the setting of local demands, however.

\section{Limitations of local occupancy}\label{sec:limits}

One cannot rely on the local occupancy LP~\eqref{e:gLOm} alone for arbitrarily large fugacities.
The smallest graph which is not a union of complete graphs is $G=K_{1,2}$, and this already provides a counterexample. For $\blam=(7/5,7/5,7/5)$ and the local occupancy parameters from \Cref{prop:genLO}, it is easy to check that when the vertex of degree $2$ is first in the indexing order, $\bx=(1,0,0)\trans$ is feasible in the LP built from $K_{1,2}$ (in fact, it is optimal). At this $\bx$ the objective value is $1$, but the function we hope to show is a lower bound on the expected size of an independent set evaluates to $497/494 > 1$. 
Overcoming this loss in the LP relaxation in the case of heterogeneous degrees and large fugacities is a problem for future work.

\bibliographystyle{habbrv}
\bibliography{bib}

\end{document}